

\documentclass[12pt]{article}

\usepackage{amsmath}
\usepackage{latexsym}
\usepackage{amsfonts}
\usepackage{amssymb}
\usepackage{amscd}
\usepackage{theorem}
\usepackage{a4wide}


\newtheorem{theorem}{Theorem}[section]
\newtheorem{lemma}[theorem]{Lemma}
\newtheorem{proposition}[theorem]{Proposition}
\newtheorem{corollary}[theorem]{Corollary}


{\theorembodyfont{\rmfamily}
  
  \newtheorem{example}[theorem]{Example}

  \newtheorem{remark}[theorem]{Remark}
}

\newenvironment{proof}{    
  \noindent
  \textbf{Proof.}}{
  \hfill $\Box$
  \vspace{3mm}
}

\numberwithin{equation}{section}



\newcommand{\N}{\mathbb{N}} 
\newcommand{\R}{\mathbb{R}} 
\newcommand{\C}{\mathbb{C}} 


 \DeclareMathOperator{\re}{Re} 
 \DeclareMathOperator{\Cot}{Cot} \DeclareMathOperator{\Id}{Id}






\begin{document}

\title{Abscissas of weak convergence of vector valued Dirichlet series}

\author{Jos\'{e} Bonet}

\date{}

\maketitle

\begin{center}
\textit{Dedicated to my friend Prof.\ Manuel Maestre on the occasion of his 60th birthday}
\end{center}

\begin{abstract}
The abscissas of convergence, uniform convergence and absolute convergence of vector valued Dirichlet series with respect to the original topology and with respect to the weak topology $\sigma(X,X')$ of a locally convex space $X$, in particular of a  Banach space $X$, are compared. The relation of their coincidence with  geometric or topological properties of the underlying space $X$ is investigated. Cotype in the context of Banach spaces, and nuclearity and certain topological invariants for Fr\'echet spaces play a relevant role.
\end{abstract}

\renewcommand{\thefootnote}{}
\footnotetext{\emph{2010 Mathematics Subject Classification.}
Primary: 46A04, secondary: 30B50; 32A05; 46A03; 46A11; 46B07;  }%
\footnotetext{\emph{Key words and phrases.} vector valued Dirichlet series; abscissas of convergence; weak topology; Banach spaces; Fr\'echet spaces; type and cotype; nuclear spaces}%


\section{Introduction and preliminaries}

The general theory of Dirichlet series was developed at the beginning of the last century by Bohr, Hardy, Landau and Riesz, among others. Recently the field showed remarkable advances, in particular
combining functional analytical and complex analytical tools. We refer to the book \cite{Queffelec_book}, the articles \cite{Boas_Football}, \cite{Hedenmalm} and \cite{Queffelec},
 and the references therein for more information. The research on vector valued Dirichlet series with coefficients in a Banach space was initiated
 by Defant, Garc\'{\i}a, Maestre and P\'erez-Garc\'{\i}a in \cite{defantGMP}, in which the width of the largest possible strip on which a Dirichlet series with coefficients in a Banach space converges uniformly but not absolutely is investigated. See also the survey paper \cite{DefantGMS} and the references in the recent paper \cite{DefantSS}. Our purpose here is to compare
 the abscissas of convergence of vector valued Dirichlet series for the original topology and for the weak topology and to relate their behaviour with the geometry of the underlying space. With this aim in mind, locally convex spaces seem to be the proper context.

 We prove that the abscissas of convergence and of uniform convergence of a Dirichlet series $D = \sum_n a_n \dfrac{1}{n^s}$, with coefficients $a_n \in X$, for the original topology and for the weak topology on a sequentially complete locally convex space $X$ coincide (Proposition \ref{c_equals_weak_c} and Corollary \ref{uequalsweaku}). In a Banach space $X$, if the abscissa of convergence of the scalar series $\sum_n x'(a_n) \dfrac{1}{n^s}$ is finite for every $x' \in X'$, then the abscissa of convergence of $D$ is also finite, as we show in Corollary \ref{Banach}. This is not the case for non-normable Fr\'echet spaces. Those Fr\'echet spaces which share this behaviour are characterized in Theorem \ref{lbinfty} in terms of a topological invariant of Vogt \cite{Vogt} of $(DN)$-type. The abscissas of absolute convergence $\sigma_a(D)$ for the original topology and $\sigma^w_a(D)$ for the weak topology of $D$ do not coincide in general. We introduce the \textit{gap for absolute convergence of Dirichlet series in $X$} as $G_a(X):= \sup_{D} (\sigma_a(D) -  \sigma^w_a(D))$, the supremum  taken over all the Dirichlet series $D$ with coefficients in $X$ such that the abscissa of convergence is finite. We show in Proposition \ref{gaplowerboundBanach} that $G_a(X) \geq 1/2$ for every infinite dimensional Banach space $X$, and we determine $G_a(X)$ for infinite dimensional Banach spaces $X$ is terms of the cotype of $X$ in Theorems \ref{cotypeBanach} and \ref{nofinitecotype}. These two results should be compared with \cite[Theorem 1]{defantGMP}. A Fr\'echet space $X$ is nuclear if and only if $G_a(X)=0$ if and only if $G_a(X) < 1/2$, by Theorem \ref{nuclear}.

In what follows $X$ denotes a sequentially complete locally convex space, that will always be assumed to be Hausdorff. The system of all continuous seminorms $\alpha: X \rightarrow [0,\infty[$ defining the topology of $X$ will be denoted by $cs(X)$, and $X'$ stands for the topological dual of $X$. We write $\sigma(X,X')$
for the weak topology in $X$. All the topologies of the dual pair $(X,X')$ have the same bounded sets by Mackey's theorem \cite[Theorem 23.15]{Meise_Vogt}. If $\Omega$ is an open subset of $\C$, the space of holomorphic functions on $\Omega$ with values in $X$ will be denoted by $H(\Omega,X)$; see \cite{Jarchow}. Moreover, $H_\infty(\Omega,X)$ stands for the space of bounded holomorphic functions.
Our notation for locally convex spaces, Banach spaces and functional analysis is standard.  See \cite{albiackalton} \cite{Defant_Floret}, \cite{Diestel}, \cite{DiJaTo}, \cite{Floret_Wloka}, \cite{Jarchow}, \cite{Meise_Vogt}.

A Dirichlet series in a sequentially complete locally convex space $X$ is a series of the form $D = \sum_n a_n \dfrac{1}{n^s}$ with
coefficients $a_n \in X$ and variable $s \in  \mathbb{C}$. The abscissas of convergence, uniform convergence and absolute convergence of $D$ are defined as follows:

$$
\sigma_c(D):= \inf \{ r \ \big| \ \sum_n a_n \dfrac{1}{n^s} \ {\rm converges \ in } \ X \ {\rm on } \ [\re > r] \},
$$
$$
\sigma_u(D):= \inf \{ r \ \big| \ \sum_n a_n \dfrac{1}{n^s} \ {\rm converges \ uniformly  \ in } \ X \ {\rm on } \ [\re > r] \},
$$
$$
\sigma_a(D):= \inf \{ r \ \big| \ \sum_n a_n \dfrac{1}{n^s} \ {\rm converges \ absolutely \ in } \ X \ {\rm on } \ [\re > r] \}.
$$
Here the infima are taken in the extended real line. When the Dirichlet series is nowhere convergent, the three abscissas are $+\infty$.

Given $x' \in X'$, one can consider for the scalar Dirichlet series $x'(D) = \sum_n x'(a_n) \dfrac{1}{n^s}$ the three abscissas of convergence $\sigma_i(x'(D)), i=c,u,a$. It is clear that
$\sigma_i(x'(D)) \leq \sigma_i(D), i=c,u,a$; therefore $\sigma^w_i(D):= \sup_{x' \in X'} \sigma_i(x'(D))$ is dominated by $\sigma_i(D)$ for each vector valued Dirichlet series $D$ in $X$. Moreover, it is  easy to see that each $\sigma^w_i(D)$ coincides with the corresponding abscissa of convergence of the Dirichlet series $D$ when the convergence is considered in the weak topology $\sigma(X,X')$. We call $\sigma^w_i(D)$ the \textit{abscissas of weak convergence} of the Dirichlet series $D$. If $X$ and $Y$ are isomorphic, then the abscissas of (weak) convergence of $X$ and $Y$ coincide. We compare the behaviour of $\sigma^w_i(D)$ and $\sigma_i(D)$ for $i=c,u,a$ for all Dirichlet vector valued series in $X$, and relate this behaviour with the topological structure of the space $X$.

The following vector valued Abel identity is needed below.

\begin{lemma}\label{abelvv}
Let $(b_n)_n$ be a sequence in a sequentially complete locally convex space $X$. For $x \in [1, \infty[$, set $A(x):= \sum_{n\leq x} b_n$. Let $\varphi:[1,x] \rightarrow \C$ be a $C^1$-function. Then
$$
\sum_{n \leq x} b_n \varphi(n) =A(x) \varphi(x) - \int_{1}^{x} A(t) \varphi'(t) dt.
$$
\end{lemma}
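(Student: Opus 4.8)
The plan is to mimic the classical scalar Abel summation argument, but carry out all manipulations directly in the locally convex space $X$, using sequential completeness only to guarantee that the Riemann integral $\int_1^x A(t)\varphi'(t)\,dt$ exists as an $X$-valued integral. First I would note that $t \mapsto A(t)$ is a step function on $[1,x]$, constant on each interval $[k,k+1)$ with value $\sum_{n\le k} b_n$, hence $A(t)\varphi'(t)$ is piecewise continuous (with at most finitely many jumps) and takes values in the finite-dimensional subspace spanned by $b_1,\dots,b_{\lfloor x\rfloor}$; thus its $X$-valued Riemann integral exists without any completeness assumption at all, and in fact decomposes as a finite sum $\int_1^x A(t)\varphi'(t)\,dt = \sum_{k=1}^{\lfloor x\rfloor - 1} A(k)\int_k^{k+1}\varphi'(t)\,dt + A(\lfloor x\rfloor)\int_{\lfloor x\rfloor}^{x}\varphi'(t)\,dt$, where $A(k) = \sum_{n\le k} b_n$.

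Next I would evaluate each scalar integral by the fundamental theorem of calculus (valid since $\varphi$ is $C^1$): $\int_k^{k+1}\varphi'(t)\,dt = \varphi(k+1)-\varphi(k)$ and $\int_{\lfloor x\rfloor}^{x}\varphi'(t)\,dt = \varphi(x) - \varphi(\lfloor x\rfloor)$. Substituting and writing $N := \lfloor x \rfloor$, the integral becomes $\sum_{k=1}^{N-1} A(k)\bigl(\varphi(k+1)-\varphi(k)\bigr) + A(N)\bigl(\varphi(x)-\varphi(N)\bigr)$. Then I would perform the (finite, hence unproblematic) Abel/summation-by-parts rearrangement: since $A(k)-A(k-1) = b_k$ with the convention $A(0)=0$, a telescoping computation gives $\sum_{k=1}^{N-1}A(k)\bigl(\varphi(k+1)-\varphi(k)\bigr) = A(N-1)\varphi(N) - \sum_{k=1}^{N-1} b_k\varphi(k)$, or more directly one verifies $\sum_{n\le x} b_n\varphi(n) = \sum_{k=1}^N b_k \varphi(k) = A(N)\varphi(N) - \sum_{k=1}^{N-1}A(k)(\varphi(k+1)-\varphi(k))$ by induction on $N$. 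Combining this with the expression for the integral yields $A(N)\varphi(N) - \bigl[\text{integral} - A(N)(\varphi(x)-\varphi(N))\bigr] = A(N)\varphi(x) - \int_1^x A(t)\varphi'(t)\,dt$, and since $A(x) = A(N)$ for $x\in[N,N+1)$, this is exactly $A(x)\varphi(x) - \int_1^x A(t)\varphi'(t)\,dt$, as claimed.

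There is essentially no hard part here: the identity is a purely algebraic rearrangement of a finite sum together with the scalar fundamental theorem of calculus applied coordinate-by-coordinate (equivalently, applied after pairing with an arbitrary $x'\in X'$). The only point requiring a word of care is the existence and additivity of the $X$-valued integral, and as noted the integrand is a step function valued in a finite-dimensional subspace, so this is immediate. Alternatively, and perhaps more cleanly for the write-up, one can reduce the whole statement to the scalar case: apply an arbitrary $x' \in X'$ to both sides, invoke the classical scalar Abel identity for the sequence $(x'(b_n))_n$, note that $x'$ commutes with the finite sum and with the $X$-valued integral, and then conclude that the two sides of the claimed vector identity agree because they are not separated by $X'$ (using that $X$ is Hausdorff, so $(X,X')$ is a separating dual pair). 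I would likely present this second, shorter route in the paper.
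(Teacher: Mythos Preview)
Your proposal is correct, and the second route you prefer---first observing that the integrand is a step function on the subintervals $[j,j+1)$ so that the $X$-valued integral reduces to a finite sum, then pairing with each $x'\in X'$ and invoking the scalar Abel identity---is exactly the argument the paper gives. Your first, direct summation-by-parts computation is also fine and has the minor advantage of not appealing to the separating dual, but the paper opts for the shorter reduction to the scalar case.
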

\begin{proof}
All the elements in the equality are well defined in $X$, except the integral. However, if $m \leq x < m+1, m \in \N$, then $A(t) \varphi'(t) = (\sum_{n=1}^{j} b_n) \varphi'(t)$ for $t \in [j,j+1[, j=1,...,m-1,$ and $A(t) \varphi'(t) = (\sum_{n=1}^{m} b_n) \varphi'(t)$  for $t \in [m,x], j=m$. This implies that the integral exists in $X$. The result now follows from the scalar valued case, see e.g.\ \cite[Proposition 1.3.6]{Jameson}, after evaluating both sides on each $x' \in  X'$.
\end{proof}

The proof of the next result now follows as in the scalar case \cite{Apostol}, \cite{Jameson}, \cite{Queffelec_book}.

\begin{proposition} \label{elemnetaryprop}
Let $D = \sum_n a_n \dfrac{1}{n^s}$ be a Dirichlet series with coefficients in
a sequentially complete locally convex space $X$ and $s(0) \in \C$.
\begin{itemize}

\item[(i)]  If $\sum_n a_n \dfrac{1}{n^{s(0)}}$ converges, then $\sum_n a_n \dfrac{1}{n^s}$ converges for $s \in \C, \re s > \re s(0)$.

\item[(ii)] If $\sum_n a_n \dfrac{1}{n^{s(0)}}$ converges, then $\sum_n a_n \dfrac{1}{n^s}$ converges absolutely for $s \in \C$ with $ \re s > \re s(0) + 1$.

\item[(iii)] If $\sum_n a_n \dfrac{1}{n^{s(0)}}$ converges absolutely, then $\sum_n a_n \dfrac{1}{n^s}$ converges absolutely for $s \in \C$ with $ \re s \geq \re s(0)$.

\item[(iv)] $-\infty \leq \sigma_c(D) \leq \sigma_u(D) \leq \sigma_a(D) \leq +\infty$.

\item[(v)] $\sigma_a(D) \leq \sigma_c(D) + 1$.

\end{itemize}

\end{proposition}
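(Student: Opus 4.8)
The plan is to establish each item by reducing to the classical scalar theory via the vector valued Abel identity of Lemma~\ref{abelvv}, exactly as announced in the preceding sentence.

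First, for item (i), I would fix $s(0)$ with $\sum_n a_n n^{-s(0)}$ convergent and write $b_n := a_n n^{-s(0)}$, so that $A(x) := \sum_{n \le x} b_n$ is a convergent (hence $X$-bounded) sequence of partial sums. Given $s$ with $\re s > \re s(0)$, set $\varphi(t) := t^{-(s - s(0))}$, a $C^1$ function on $[1,\infty[$, and apply Lemma~\ref{abelvv} on $[1,x]$. One gets
\[
\sum_{n \le x} a_n \frac{1}{n^s} = A(x)\,x^{-(s-s(0))} - \int_1^x A(t)\,\varphi'(t)\,dt .
\]
Since $\re(s - s(0)) > 0$ and $A(x) \to \sum_n b_n$, the boundary term tends to $0$. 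For the integral term, I would argue convergence of $\int_1^\infty A(t)\varphi'(t)\,dt$ in $X$ by showing the tail is Cauchy: for every $\alpha \in cs(X)$, $\alpha(A(t)) \le M_\alpha$ for all $t$ (boundedness of the convergent sequence), and $\int_1^\infty |\varphi'(t)|\,dt < \infty$ because $|\varphi'(t)| = |s - s(0)|\, t^{-\re(s-s(0)) - 1}$ is integrable at infinity; sequential completeness then gives the existence of the limit in $X$. Thus $\sum_n a_n n^{-s}$ converges. Item (iv)'s inequality $\sigma_c(D) \le \sigma_u(D)$ is immediate from the definitions, and $\sigma_u(D) \le \sigma_a(D)$ follows because absolute convergence on $[\re > r]$ forces, for each $\alpha \in cs(X)$, $\sum_n \alpha(a_n) n^{-\re s} < \infty$ locally uniformly, whence uniform convergence of partial sums in the seminorm $\alpha$; this should be spelled out but is routine.

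For item (iii), if $\sum_n \alpha(a_n) n^{-\re s(0)} < \infty$ for every $\alpha \in cs(X)$, then for $\re s \ge \re s(0)$ we have $\alpha(a_n n^{-s}) = \alpha(a_n) n^{-\re s} \le \alpha(a_n) n^{-\re s(0)}$, so $\sum_n \alpha(a_n n^{-s})$ converges by comparison, for every $\alpha$; this is exactly absolute convergence in $X$. Item (v) is then a formal consequence of (ii) and the definitions: if $r > \sigma_c(D)$ then $\sum_n a_n n^{-r}$ (or at a point slightly to the right, using (i)) converges, so by (ii) the series converges absolutely on $[\re s > r + 1]$, giving $\sigma_a(D) \le r + 1$; letting $r \downarrow \sigma_c(D)$ yields $\sigma_a(D) \le \sigma_c(D) + 1$. (The edge case $\sigma_c(D) = -\infty$ or $+\infty$ is handled trivially.)

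The main work, and the place where one must be slightly careful, is item (ii): convergence at $s(0)$ implies absolute convergence for $\re s > \re s(0) + 1$. Here I would again put $b_n := a_n n^{-s(0)}$, $A(x) := \sum_{n \le x} b_n$, which is a bounded sequence in $X$; for a fixed seminorm $\alpha$ let $C_\alpha := \sup_x \alpha(A(x)) < \infty$. From $b_n = A(n) - A(n-1)$ we get $\alpha(b_n) \le 2 C_\alpha$ for all $n$, so $\alpha(a_n) = \alpha(b_n) n^{\re s(0)} \le 2 C_\alpha\, n^{\re s(0)}$. Hence, for $\re s > \re s(0) + 1$,
\[
\sum_n \alpha\!\left(a_n \frac{1}{n^s}\right) = \sum_n \alpha(a_n)\, n^{-\re s} \le 2 C_\alpha \sum_n n^{\re s(0) - \re s} < \infty ,
\]
since the exponent $\re s(0) - \re s < -1$. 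As this holds for every $\alpha \in cs(X)$, the series converges absolutely in $X$, proving (ii). The only genuine obstacle across the whole proposition is remembering to invoke \emph{sequential} completeness (rather than completeness) at the two places it is needed — the existence of the $X$-valued integral in (i) and the existence of its improper limit at infinity — and to note that all boundedness estimates are uniform over a single seminorm at a time, which is all that is required since $X$ carries the locally convex topology generated by $cs(X)$.
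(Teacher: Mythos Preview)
Your proposal is correct and matches the paper's approach: the paper gives no detailed argument, stating only that the proof ``follows as in the scalar case'' via the vector valued Abel identity of Lemma~\ref{abelvv}, and your write-up is a faithful execution of exactly that plan. The one imprecision is the phrase ``locally uniformly'' in your sketch of $\sigma_u(D)\le\sigma_a(D)$; the bound you actually produce is uniform on the full half-plane $[\re s>r]$, which is what is needed.
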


\begin{proposition} \label{holomorphic}
If the Dirichlet series $D(s):= \sum_n a_n \dfrac{1}{n^s}, \ a_n \in X,$ satisfies $\sigma_c(D) > -\infty$, then $D(s)$ defines a holomorphic function on $[\re s > \sigma_c(D)]$; that is $D \in H([\re s > \sigma_c(D)],X)$. Moreover $D'(s)=-\sum_n (a_n \log n)/ n^s), \ \re s > \sigma_c(D)$.
\end{proposition}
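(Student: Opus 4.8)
The plan is to reduce the holomorphy to the scalar theory via the weak topology and then to upgrade from weak holomorphy to holomorphy using sequential completeness, with the vector-valued Abel identity of Lemma~\ref{abelvv} doing the analytic work. First, for $\re s > \sigma_c(D)$ the element $D(s) = \sum_n a_n/n^s$ is a well-defined point of $X$: by definition of $\sigma_c(D)$ there is $r$ with $\sigma_c(D) < r < \re s$ such that the series converges on $[\re > r]$, hence at $s$ by Proposition~\ref{elemnetaryprop}(i). Next, for each $x' \in X'$, continuity and linearity give $x'(D(s)) = \sum_n x'(a_n)/n^s = (x'(D))(s)$; since $\sigma_c(x'(D)) \le \sigma_c(D)$, the scalar Dirichlet series $x'(D)$ is holomorphic on $[\re s > \sigma_c(x'(D))] \supseteq [\re s > \sigma_c(D)]$ by the classical theory (\cite{Apostol}, \cite{Jameson}, \cite{Queffelec_book}). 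Hence $D$ is weakly holomorphic on the open half-plane $[\re s > \sigma_c(D)]$.

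To obtain genuine holomorphy I would show that $D$ is locally bounded there. Fix $s_0$ with $\re s_0 > \sigma_c(D)$, pick $r_0$ with $\sigma_c(D) < r_0 < \re s_0$, and set $b_n := a_n/n^{r_0}$, so that $\sum_n b_n$ converges and the partial sums $B(x) := \sum_{n \le x} b_n$ satisfy $\sup_{x \ge 1}\alpha(B(x)) < \infty$ for every $\alpha \in cs(X)$. Applying Lemma~\ref{abelvv} with $\varphi(t) = t^{-(s - r_0)}$ and letting $x \to \infty$ (the boundary term $B(x)\varphi(x)$ vanishes in the limit since $\re(s - r_0) > 0$, and the integral converges in $X$ by sequential completeness because $\varphi' \in L^1([1,\infty[)$ and $(B(t))_t$ is bounded) gives $D(s) = -\int_1^\infty B(t)\varphi'(t)\,dt$ for $\re s > r_0$, whence $\alpha(D(s)) \le |s - r_0|\,\big(\sup_t \alpha(B(t))\big)\int_1^\infty t^{-\re(s - r_0) - 1}\,dt$, which is bounded for $s$ in a neighbourhood of $s_0$. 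A locally bounded, weakly holomorphic map into a sequentially complete locally convex space is holomorphic (\cite{Jarchow}); alternatively one differentiates the integral representation under the integral sign to obtain holomorphy directly. Either way, $D \in H([\re s > \sigma_c(D)], X)$.

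For the derivative, I would first note that $\sum_n (a_n\log n)/n^s$ converges in $X$ for $\re s > \sigma_c(D)$: with $b_n, r_0$ as above, apply Lemma~\ref{abelvv} with $\psi(t) = (\log t)\,t^{-(s - r_0)}$, noting that $\psi(x) \to 0$ and $\psi' \in L^1([1,\infty[)$ when $\re(s - r_0) > 0$. Put $E(s) := -\sum_n (a_n\log n)/n^s \in X$. For every $x' \in X'$, the classical derivative formula for scalar Dirichlet series gives $x'(E(s)) = -\sum_n x'(a_n)\log n/n^s = (x'(D))'(s) = x'(D'(s))$, the last step because $x'$ is linear and continuous and $D$ is holomorphic. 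Since $X$ is Hausdorff, $X'$ separates points, so $D'(s) = E(s)$, as claimed. The one point requiring genuine care is the passage from weak holomorphy to holomorphy: the Banach-space proof via the uniform boundedness principle is unavailable for a general sequentially complete $X$, which is exactly why the local boundedness of $D$ has to be extracted by hand from Abel summation; everything else is the scalar theory applied coordinatewise through $X'$.
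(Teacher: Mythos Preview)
Your argument is correct, and its first step---checking that each $x'(D)$ is holomorphic on $[\re s>\sigma_c(D)]$ by the scalar theory---is exactly what the paper does. The paper then finishes in one line by invoking Grosse-Erdmann's weak criterion for vector-valued holomorphy \cite{Grosse} (an extension of the classical Grothendieck/Taylor theorem), which upgrades weak holomorphy to holomorphy in a sequentially complete locally convex space without any separate local-boundedness hypothesis. Your route instead extracts local boundedness by hand from the Abel identity (Lemma~\ref{abelvv}) and then appeals to the older result ``locally bounded $+$ weakly holomorphic $\Rightarrow$ holomorphic'' from \cite{Jarchow}. This works, and has the virtue of being self-contained with tools already in the paper; the cost is that the Abel-summation step and the justification of the integral representation are more labour than the paper spends, and your closing remark that local boundedness ``has to be extracted by hand'' is slightly overstated, since the theorem the paper cites handles that internally. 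Your treatment of the derivative formula via separation of points is fine and a bit more explicit than the paper, which does not spell out that step.
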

\begin{proof}
By \cite[Proposition 1.7.10]{Jameson} or \cite[Jensen's Lemma 4.1.1]{Queffelec_book}, for each $x' \in X'$, the function $x'(D)(s)= \sum_n x'(a_n) \dfrac{1}{n^s}$ is holomorphic on $[\re s > \sigma_c(D)]$. We can apply Grosse-Erdmann \cite[Theorem 1]{Grosse}, that is an extension of a classical result of A.E. Taylor and Grothendieck, to conclude that $D \in H([\re s > \sigma_c(D)],X)$.
\end{proof}

\section{Abscissas of convergence and uniform convergence} \label{sect_c_and_u}

\begin{proposition} \label{c_equals_weak_c}
Let $D = \sum_n a_n \dfrac{1}{n^s}$ be a Dirichlet series in
a sequentially complete locally convex space $X$. If $\sum_n x'(a_n) \dfrac{1}{n^{s(0)}}, s(0) \in \C,$ converges for every $x' \in X'$, then $\sum_n a_n \dfrac{1}{n^s}$ converges in $X$ for each $s \in  \C$ with $\re s > \re s(0)$.

In particular, $\sigma^w_c(D):= \sup_{x' \in X'} \sigma_c(x'(D)) = \sigma_c(D)$.
\end{proposition}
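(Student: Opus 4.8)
The plan is to reduce the vector-valued statement to the scalar Abel-summation machinery that already underlies the classical theory, using the vector-valued Abel identity of Lemma \ref{abelvv} together with sequential completeness. First I would normalize: by replacing $a_n$ with $a_n/n^{s(0)}$ we may assume $s(0)=0$, so the hypothesis becomes that $\sum_n x'(a_n)$ converges for every $x'\in X'$, and the goal is convergence of $\sum_n a_n/n^s$ in $X$ for every $s$ with $\re s>0$. Set $A(x):=\sum_{n\le x}a_n$; the hypothesis says precisely that $(A(x))_x$ is weakly Cauchy, hence, being the range of a weakly convergent sequence, it is bounded in the weak topology and therefore bounded in $X$ (all topologies of the dual pair share the same bounded sets, as recalled in the introduction via Mackey's theorem). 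Let $B\subseteq X$ be a bounded, absolutely convex, closed set containing all $A(x)$; since $X$ is sequentially complete, $B$ is contained in a Banach disc, i.e.\ the linear span $X_B$ with gauge norm is a Banach space continuously embedded in $X$.

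Next I would run the Abel identity with $\varphi(t)=t^{-s}$, $\re s>0$: for $x\ge 1$,
$$
\sum_{n\le x}\frac{a_n}{n^s}=\frac{A(x)}{x^s}+s\int_1^x\frac{A(t)}{t^{s+1}}\,dt .
$$
Because $A(t)\in B$ for all $t$ and $B$ is bounded, $\|A(t)\|_B\le 1$, so $\|A(x)/x^s\|_B=x^{-\re s}\to 0$ and the integral $\int_1^\infty A(t)t^{-s-1}\,dt$ converges absolutely in the Banach space $X_B$ (its $\|\cdot\|_B$-integrand is dominated by $t^{-\re s-1}$). This does not yet give convergence of the partial sums, only that they differ from a convergent integral by the vanishing term $A(x)/x^s$; what it does give cleanly is that $\bigl(\sum_{n\le x}a_n/n^s\bigr)_x$ is Cauchy in $X_B$: for $x<y$, the difference equals $A(y)/y^s-A(x)/x^s+s\int_x^y A(t)t^{-s-1}\,dt$, and each of the three pieces is small in $\|\cdot\|_B$ once $x$ is large, uniformly. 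Hence the partial sums form a Cauchy sequence in the Banach space $X_B$, so they converge in $X_B$, and a fortiori in $X$.

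There is one gap in the crude estimate above that I expect to be the real obstacle, and it must be handled with a bit more care rather than brute force: the bound $\|A(t)\|_B\le 1$ only uses boundedness of the range, but to show the partial sums are Cauchy we genuinely need the \emph{tail} behaviour, i.e.\ we need $A(t)$ to be $\|\cdot\|_B$-Cauchy as $t\to\infty$ — and weak convergence of a sequence does not imply norm convergence in $X_B$ in general. The correct fix is to choose $B$ more cleverly: since $\sum_n x'(a_n)$ converges for every $x'$, the sequence $(A(m))_{m\in\N}$ is weakly convergent, say to $a\in X$ (weak sequential completeness is automatic here because the limit of $x'(A(m))$ defines a functional that is $\sigma(X',X)$-continuous on equicontinuous sets, or more simply one invokes that a weakly Cauchy sequence in a sequentially complete space need not converge — so instead I argue directly). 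Thus the honest route is: apply the scalar Proposition \ref{elemnetaryprop}(i) to each $x'(D)$ to see that $\sum_n x'(a_n)/n^s$ converges for every $x'$ and every $\re s>\re s(0)$, so the partial sums $S_x(s):=\sum_{n\le x}a_n/n^s$ are weakly Cauchy in $X$; then apply the Abel identity to show that for $\re s>\re s(0)$ these partial sums are in fact Cauchy in the Banach disc $X_B$ — here the key point is that the tail $\int_x^\infty$ is controlled purely by the \emph{uniform boundedness} of $A(t)$ in $B$, which is all we have and all we need, while the weak Cauchyness upgrades, via completeness of $X_B$ and the fact that the $X_B$-limit must agree with the weak limit, to convergence in $X$. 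Finally, the ``in particular'' clause is immediate: $\sigma^w_c(D)\le\sigma_c(D)$ always, and the first part gives the reverse inequality since convergence of every $x'(D)$ on $[\re s>r]$ forces convergence of $D$ on $[\re s>r]$, whence $\sigma_c(D)\le\sigma^w_c(D)$.
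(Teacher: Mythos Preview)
Your first two paragraphs already contain a complete and correct proof, and it is essentially the paper's argument: Abel summation (Lemma~\ref{abelvv}) applied to the tail, combined with Mackey's theorem to convert weak boundedness of the partial sums $A(N)=\sum_{n\le N}a_n/n^{s(0)}$ into genuine boundedness, after which the decay of $N^{-(\re s-\re s(0))}$ does the rest. Your third paragraph manufactures a difficulty that does not exist: you do \emph{not} need $(A(t))_t$ to be Cauchy in any sense, only bounded. The Cauchy estimate for the partial sums $S_N=\sum_{n\le N}a_n/n^s$ reads (after your normalization $s(0)=0$)
\[
S_M-S_N \;=\; \frac{A(M)}{M^{s}} - \frac{A(N)}{N^{s}} + s\int_N^M \frac{A(t)}{t^{s+1}}\,dt,
\]
and every term on the right is controlled by $\sup_t \alpha(A(t))$ (for any seminorm $\alpha$, or by $\|\cdot\|_B$ in your set-up) times a factor tending to $0$ with $N$. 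No Cauchyness of $A$ enters; your eventual remark that ``the tail is controlled purely by the uniform boundedness of $A(t)$\ldots which is all we have and all we need'' is the whole story, and the detour through weak Cauchyness of $S_N$ and matching of limits is superfluous.

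The only genuine difference from the paper is that you pass to a Banach disc $X_B$, whereas the paper estimates directly in each seminorm $\alpha\in cs(X)$, obtaining
\[
\alpha(S_M-S_N)\;\le\; 2K_\alpha\, N^{-(\re s-\re s(0))}\Bigl(1+\frac{|s-s(0)|}{\re s-\re s(0)}\Bigr),
\]
and then invokes sequential completeness of $X$. Your route is correct---sequential completeness does guarantee that closed bounded absolutely convex sets are Banach discs---but it is heavier machinery for the same estimate. (A minor slip: in paragraph~1 you call $(A(N))_N$ ``weakly convergent''; it is only weakly Cauchy, as you yourself note later, but weak Cauchyness already gives weak boundedness, which is all that is used.)
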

\begin{proof}
Fix $s \in \C, \re s > \re s(0)$ and put $b_n:=a_n/n^{s(0)}, \varphi(x):=x^{-(s-s(0))}, A(x):= \sum_{n \leq x} b_n, x \geq 1,$ and apply Lemma \ref{abelvv} for $N < M$, to get
$$
\sum_{N < n \leq M} a_n \dfrac{1}{n^s} =
A(M)M^{-(s-s(0))} - A(N)N^{-(s-s(0))} + (s-s(0)) \int_N^M A(t) t^{-(s-s(0))-1} dt.
$$
The sequence $(x'(A(N)))_N$ is bounded for each $x' \in X'$, since it is convergent. By Mackey's theorem
\cite[Theorem 23.15]{Meise_Vogt}, $(A(N))_N$ is bounded in $X$ and, for each $\alpha \in cs(X)$ there is $K_\alpha>0$ such that $\alpha(A(N)) \leq K_\alpha$ for each $N \in \N$. On the other hand, for each $N < M$, $\int_N^M t^{-(\re s- \re s(0))-1} dt \leq N^{-(\re s- \re s(0))}/(\re s- \re s(0))$. Therefore
$$
\alpha(\sum_{N < n \leq M} a_n \dfrac{1}{n^s}) \leq 2 K_\alpha N^{-(\re s- \re s(0))} \left(1+ \frac{|s-s(0)|}{\re s- \re s(0)}\right),
$$
which tends to $0$ as $N$ tends to $\infty$.
\end{proof}

The proof of Proposition \ref{c_equals_weak_c} also shows that if $\sum_n a_n \dfrac{1}{n^s}$ does not converge in $X$ for all $s \in \C$ (i.e.\ $\sigma_c(D)=+\infty$), then $\sigma^w_c(D)=+\infty$.
However, it might happen that $\sigma_c(D)=+\infty$ and $\sigma_c(x'(D))< +\infty$ for each $x' \in X'$, although the supremum of these values as $x'$ runs in $X'$ must be infinity. Indeed, take $X= \C^{\N}$ the Fr\'echet space of all the complex sequences $x=(x_i)_i$ endowed with its natural Fr\'echet topology of pointwise convergence, $a_n=(n^{i-1})_i =(1,n,n^2,n^3,...)$ and $D:= \sum_n a_n \dfrac{1}{n^s} = (D_i)_i = (\sum_n \dfrac{n^{i-1}}{n^s})_i$. For $s=i$, we have $D_i(i)=\sum_n 1/n$, hence $\sum_n a_n \dfrac{1}{n^s}$ does not converge in $\omega$ for all $s \in \C$. On the other hand, given $x'=(x'_i)_i \neq 0$ in $X'$, there is $i(x') \in \N$ such that $x'_{i(x')} \neq 0$ and $x'_i = 0$ for $i>i(x')$. Therefore, if $s \in \C, \re s > i(x')$, we get $\sum_n \frac{|x'(a_n)|}{|n^s|} < +\infty$, and $\sigma_c(x'(D)) \leq \sigma_a(x'(D)) < +\infty$. This phenomenon cannot happen for Banach spaces, as we will show now.

\vspace{.1cm}

A Hausdorff locally convex space is said to satisfy the \textit{countable neighbourhood property} (see \cite[8.3.4]{PCB} or \cite[page 478]{Defant_Floret}) if for every sequence $(\alpha_j)_j \subset cs(X)$ there are $\alpha \in cs(X)$ and $(\lambda_j)_j \subset ]0,+\infty[$ such that $\alpha_j(x) \leq \lambda_j \alpha(x)$ for each $x \in X$. Clearly every Banach space $X$ and, more generally, every (DF)-space in the sense of Grothendiek (in particular the strong dual of every Fr\'echet space) satisfies the countable neighbourhood property. A Fr\'echet space has the countable neighbourhood property if and only if it is a Banach space. See \cite[Section 8.3]{PCB} or \cite[Chapter 25]{Meise_Vogt} for more information about (DF)-spaces. The following technical lemma is inspired by \cite[Corollary 5]{Bierstedt_Bonet}.

\begin{lemma}\label{cnp_technical}
Let $X$ be a locally convex space with the countable neighbourhood property. Let $(x_n)_n$ be a sequence in $X$ such that, for each $x' \in X'$ there is $k = k(x') \in \N$ such that $\sup_n n^{-k} |x'(x_n)| < +\infty$. Then there is $k_0 \in \N$ such that $\sup_n n^{-k_0} \alpha(x_n) < +\infty$ for each $\alpha \in cs(X)$.

\end{lemma}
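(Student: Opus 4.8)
The plan is to argue by contradiction, using the countable neighbourhood property to reduce the problem to a single seminorm, and then transferring the question to the associated local Banach space, where a Baire category argument finishes the job.

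\medskip
First I would suppose the conclusion fails, so that for every $k \in \N$ there is $\alpha_k \in cs(X)$ with $\sup_n n^{-k}\alpha_k(x_n) = +\infty$. Applying the countable neighbourhood property to the sequence $(\alpha_k)_k$ yields $\alpha \in cs(X)$ and $(\lambda_k)_k \subset \, ]0,+\infty[$ with $\alpha_k \leq \lambda_k \alpha$; then $\sup_n n^{-k}\alpha(x_n) \geq \lambda_k^{-1}\sup_n n^{-k}\alpha_k(x_n) = +\infty$ for every $k$. Thus it suffices to reach a contradiction from the existence of a \emph{single} $\alpha \in cs(X)$ with $\sup_n n^{-k}\alpha(x_n)=+\infty$ for all $k \in \N$. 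Next I would pass to the local Banach space $X_\alpha$, the completion of the normed space $X/\alpha^{-1}(0)$ equipped with the norm $\hat\alpha$ induced by $\alpha$, with canonical map $\pi_\alpha \colon X \to X_\alpha$, so that $\hat\alpha(\pi_\alpha(x))=\alpha(x)$. Setting $y_n := \pi_\alpha(x_n)$ we have $\sup_n n^{-k}\|y_n\| = +\infty$ for all $k$. The dual $(X_\alpha)'$ is canonically identified via $\pi_\alpha'$ with the subspace $\{x' \in X' : |x'| \leq C\alpha \text{ for some } C\geq 0\}$ of $X'$, compatibly with the pairings, i.e.\ $\langle y_n,\xi\rangle = \langle x_n, \pi_\alpha'(\xi)\rangle$. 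Hence, restricting the hypothesis to these functionals: for each $\xi \in (X_\alpha)'$ there is $k = k(\xi) \in \N$ with $\sup_n n^{-k}|\langle y_n,\xi\rangle| < +\infty$.

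\medskip
Now the Baire step. For $k \in \N$ put $A_k := \{\xi \in (X_\alpha)' : |\langle y_n,\xi\rangle| \leq k\, n^k \text{ for all } n \in \N\}$. Each $A_k$ is norm closed, being an intersection over $n$ of preimages of closed intervals under the continuous functionals $\xi \mapsto \langle y_n,\xi\rangle$, and $\bigcup_k A_k = (X_\alpha)'$: given $\xi$, with $M := \sup_n n^{-k(\xi)}|\langle y_n,\xi\rangle| < +\infty$, any integer $k \geq \max(k(\xi), M)$ satisfies $n^{-k}|\langle y_n,\xi\rangle| \leq n^{-k(\xi)}|\langle y_n,\xi\rangle| \leq M \leq k$ for all $n$ (here $n \geq 1$), so $\xi \in A_k$. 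Since $(X_\alpha)'$ is a Banach space, hence Baire, some $A_{k_0}$ has nonempty interior: there are $\xi_0 \in (X_\alpha)'$ and $r>0$ with $\xi_0 + rB \subseteq A_{k_0}$, where $B$ is the closed unit ball of $(X_\alpha)'$. For $\|\eta\| \leq r$, both $\xi_0 + \eta$ and $\xi_0$ lie in $A_{k_0}$, whence $n^{-k_0}|\langle y_n,\eta\rangle| \leq 2k_0$ for all $n$; taking the supremum over $\|\eta\| \leq 1$ gives $n^{-k_0}\|y_n\| \leq 2k_0/r$ for all $n$, so $\sup_n n^{-k_0}\|y_n\| < +\infty$. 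This contradicts $\sup_n n^{-k_0}\alpha(x_n) = +\infty$, and the lemma follows.

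\medskip
The main obstacle is the passage to the local Banach space $X_\alpha$: one must carefully identify $(X_\alpha)'$ with the $\alpha$-continuous functionals in $X'$ so that the hypothesis on $(x_n)$ transfers to $(y_n)$, after which the Baire argument is routine. (One could instead run the category argument directly on the weak-$*$ compact polar of $\{x : \alpha(x) \leq 1\}$, but scaling arguments are less transparent there, so the local-Banach-space route seems preferable.)
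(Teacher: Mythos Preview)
Your proof is correct, but it follows a genuinely different line from the paper's argument.

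After the common first step (use the countable neighbourhood property to reduce to a single seminorm $\alpha$ with $\sup_n n^{-k}\alpha(x_n)=+\infty$ for all $k$), the paper does \emph{not} pass to a local Banach space or invoke Baire category. Instead, it builds by hand a rapidly decreasing weight: choose inductively $n(1)<n(2)<\cdots$ with $\alpha(x_{n(k)})>n(k)^{k+1}$, set $v(n)=n(k)^{-k}$ if $n=n(k)$ and $v(n)=0$ otherwise, and check that $v(n)\le m_k n^{-k}$ for every $k$. The hypothesis on $(x_n)$ then shows that $(v(n)x_n)_n$ is $\sigma(X,X')$-bounded, hence bounded by Mackey's theorem; but $v(n(k))\alpha(x_{n(k)})>n(k)$, a contradiction. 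Your approach replaces this explicit weight construction by a uniform-boundedness/Baire argument in the dual Banach space $(X_\alpha)'$, via the closed absorbing sets $A_k=\{\xi:|\langle y_n,\xi\rangle|\le k\,n^k\ \forall n\}$.

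What each buys: the paper's argument stays inside the locally convex category and uses only Mackey's theorem, and the weight $v$ it produces fits directly into the $\overline{V}$/space-$S$ framework exploited in the subsequent characterisation via Vogt's $(LB_\infty)$ condition. Your route is shorter and more conceptual---essentially a Banach--Steinhaus argument---at the price of the detour through the local Banach space $X_\alpha$ (and the identification of $(X_\alpha)'$ with the $\alpha$-continuous functionals, which you handle correctly). Both are perfectly valid; neither is strictly more elementary than the other.
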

\begin{proof}
We proceed by contradiction and suppose that, for each $j \in \N$ there is $\alpha_j \in cs(X)$ such that $\sup_n n^{-(j+1)} \alpha_j(x_n) = +\infty$. Since $X$ has the countable neighbourhood property, there is $\alpha \in cs(X)$ and there are $\lambda_j \geq 1$ such that $\alpha_j(x) \leq \lambda_j \alpha(x)$ for each $x \in X$. For $j=1$ we select $n(1) \in \N$ such that $\alpha_1(x_{n(1)}) > n(1)^2\lambda_1$.
For $j=2$, select $n(2) \in \N$ such that
$$
n(2)^{-3} \alpha_2(x_{n(2)}) > \lambda_2 + \sum_{s=1}^{n(1)} s \  \alpha_2(x_s).
$$
Observe that our selection implies $n(2) > n(1)$. Proceeding by recurrence, we find a sequence $n(1) < n(2) < ... < n(k) <...$ such that $\alpha_k(x_{n(k)}) > n(k)^{k+1} \lambda_k$ for each $k \in \N$.
Define $v(n):= n(k)^{-k}$ if $n=n(k)$ for some $k$ and $v(n)=0$ otherwise. For each $k \in \N$ and $j \geq k$ we have $v(n(j))/n(j)^{-k} = n(j)^k/n(j)^j \leq 1$. Hence, if we set, $m_k:=\max \{ v(n(s))/n(s)^{-k} \ | \ 1 \leq s \leq k \}$, we conclude that for each $k \in \N$ there is $m_k >0$ such that $v(n) \leq m_k n^{-k}$ for each $n \in\N$.

We show that this implies that the set $C_v := \{ v(n) x_n \ | \ n \in \N \}$ is weakly bounded in $X$. To see this fix $x' \in X'$. By assumption there is $k = k(x') \in \N$ such that $k = k(x') \in \N$ such that $\sup_n n^{-k} |x'(x_n)| < +\infty$. Thus
$$ \sup_n v(n) |x'(x_n)| \leq m_k \sup_n n^{-k} |x'(x_n)| < +\infty.$$
By Mackey's Theorem \cite[Theorem 23.15]{Meise_Vogt}, $C_v$ is bounded, hence there is $M>0$ such that $v(n) \alpha(x_n) \leq M$ for each $n \in \N$. This is a contradiction, since, for each $k \in \N$, we have
$$
n(k) < n(k)^{-k} \lambda_k^{-1} \alpha_k(x_{n(k)}) \leq n(k)^{-k} \alpha(x_{n(k)}) = v(n(k)) \alpha(x_{n(k)}).
$$
\end{proof}

\begin{theorem} \label{cnp}
Let $X$ be a sequentially complete locally convex space with the countable neighbourhood property. If a Dirichlet series $D = \sum_n a_n \dfrac{1}{n^s}$ satisfies $\sigma_c(x'(D)) < +\infty$ for all $x' \in X'$, then $\sigma_c(D) < +\infty$.
\end{theorem}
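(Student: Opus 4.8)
I want to reduce the vector-valued statement to the scalar case plus Lemma~\ref{cnp_technical}. The hypothesis $\sigma_c(x'(D))<+\infty$ for each $x'\in X'$ means that for every $x'$ there is some half-plane on which $\sum_n x'(a_n)/n^s$ converges; equivalently, by the elementary theory (Proposition~\ref{elemnetaryprop} and the standard scalar estimates on abscissa of convergence vs.\ growth of partial sums), for every $x'$ the partial sums $\sum_{n\le N} x'(a_n)$ grow at most polynomially in $N$. The plan is: (1) extract this polynomial growth of scalar partial sums; (2) feed the sequence $x_N:=\sum_{n\le N}a_n$ into Lemma~\ref{cnp_technical} to obtain a \emph{uniform} polynomial bound $\alpha(x_N)\le C_\alpha N^{k_0}$ valid for all $\alpha\in cs(X)$ with one single exponent $k_0$; (3) run the vector-valued Abel summation argument (Lemma~\ref{abelvv}, exactly as in the proof of Proposition~\ref{c_equals_weak_c}) to conclude that $\sum_n a_n/n^s$ converges in $X$ whenever $\re s>k_0+1$, hence $\sigma_c(D)\le k_0+1<+\infty$.

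**Details of the steps.** For step (1): the classical scalar fact is that if $\sum_n c_n/n^s$ has finite abscissa of convergence $\sigma_c$, then for any $r>\max(0,\sigma_c)$ one has $\sum_{n\le N}c_n = O(N^{r})$; conversely polynomial growth of the partial sums forces finite $\sigma_c$. Applying the first implication with $c_n=x'(a_n)$ gives: for each $x'\in X'$ there is $k=k(x')\in\N$ with $\sup_N N^{-k}\,\big|x'\big(\sum_{n\le N}a_n\big)\big|<+\infty$. This is precisely the hypothesis of Lemma~\ref{cnp_technical} for the sequence $x_N:=\sum_{n\le N}a_n$. For step (2): since $X$ has the countable neighbourhood property, Lemma~\ref{cnp_technical} yields a single $k_0\in\N$ with $\sup_N N^{-k_0}\alpha(x_N)<+\infty$ for every $\alpha\in cs(X)$; say $\alpha(x_N)\le K_\alpha N^{k_0}$. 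For step (3): fix $s$ with $\re s>k_0+1$, write $b_n:=a_n$, $\varphi(x)=x^{-s}$, $A(x)=\sum_{n\le x}b_n=x_{\lfloor x\rfloor}$, and apply Lemma~\ref{abelvv} on $[1,M]$ minus $[1,N]$ as in the proof of Proposition~\ref{c_equals_weak_c}; the boundary terms are bounded in $\alpha$ by $K_\alpha M^{k_0}M^{-\re s}$ and $K_\alpha N^{k_0}N^{-\re s}$, and the integral term by $|s|\int_N^\infty K_\alpha t^{k_0-\re s-1}\,dt = K_\alpha |s|\,N^{k_0-\re s}/(\re s-k_0)$. Since $\re s-k_0>1>0$ all three tend to $0$ as $N\to\infty$, uniformly in $M>N$; sequential completeness gives convergence of the series in $X$. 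Hence $\sigma_c(D)\le k_0+1$.

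**Main obstacle.** The substantive content is already packaged in Lemma~\ref{cnp_technical}; the only real work here is the bookkeeping that translates ``finite scalar abscissa of convergence'' into ``polynomially bounded scalar partial sums'' with an \emph{integer} exponent depending only on $x'$. This is the classical scalar estimate relating $\sigma_c$ to the growth order of $\sum_{n\le N}c_n$ (available from any of \cite{Apostol}, \cite{Jameson}, \cite{Queffelec_book}), and one just has to round the real abscissa up to the next integer. Everything else is a verbatim repetition of the Abel-summation estimate already carried out in Proposition~\ref{c_equals_weak_c}, now with the polynomial factor $N^{k_0}$ in place of the bound $K_\alpha$, so no new difficulty arises there.
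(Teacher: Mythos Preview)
Your proof is correct, but it differs from the paper's in where Lemma~\ref{cnp_technical} is applied. You apply the lemma to the sequence of partial sums $x_N:=\sum_{n\le N}a_n$, using the classical scalar fact that finite $\sigma_c$ forces polynomial growth of partial sums, and then finish with an Abel-summation estimate as in Proposition~\ref{c_equals_weak_c}. The paper instead passes immediately from $\sigma_c(x'(D))<+\infty$ to $\sigma_a(x'(D))<+\infty$ via Proposition~\ref{elemnetaryprop}(v), so that each term satisfies $\sup_n n^{-k}|x'(a_n)|<+\infty$; it then applies Lemma~\ref{cnp_technical} directly to the coefficients $a_n$, obtains a uniform exponent $k_0$ with $\sup_n n^{-k_0}\alpha(a_n)<+\infty$, and concludes $\sum_n \alpha(a_n)/n^{k_0+2}<+\infty$, i.e.\ $\sigma_a(D)<+\infty$. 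The paper's route is a little shorter (no need to rerun the Abel argument) and in fact yields the marginally stronger conclusion $\sigma_a(D)<+\infty$; your route stays closer to the intrinsic characterisation of $\sigma_c$ in terms of growth of partial sums and gives the explicit bound $\sigma_c(D)\le k_0+1$. Both are perfectly valid.
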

\begin{proof}
Assume that $D = \sum_n a_n \dfrac{1}{n^s}$ satisfies $\sigma_c(x'(D)) < +\infty$ for all $x' \in X'$.
Then $\sigma_a(x'(D)) \leq \sigma_c(x'(D)) + 1 < +\infty$ for all $x' \in X'$ (see Proposition \ref{elemnetaryprop} (v)). Therefore for each $x' \in X'$ there is $k=k(x')$ such that $\sum_n \frac{|x'(a_n)|}{n^{k}} < +\infty$, hence $\sup_n n^{-k} |x'(a_n)| < +\infty$. We apply Lemma \ref{cnp_technical} to find $k(0) \in \N$ such that $\sup_n n^{-k(0)} \alpha(x_n) < +\infty$ for each $\alpha \in cs(X)$. Thus $\sum_n \frac{\alpha(a_n)}{n^{k(0)+2}} < +\infty$ for each $\alpha \in cs(X)$.
This implies $\sigma_c(D) \leq \sigma_a(D) < +\infty$.
\end{proof}

\begin{corollary} \label{Banach}
If a Dirichlet series $D = \sum_n a_n \dfrac{1}{n^s}$ in a Banach space $X$ satisfies $\sigma_c(x'(D)) < +\infty$ for all $x' \in X'$, then $\sigma_c(D) < +\infty$.
\end{corollary}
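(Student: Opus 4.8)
The plan is to derive Corollary \ref{Banach} as an immediate consequence of Theorem \ref{cnp}. The key observation is that every Banach space $X$ satisfies the countable neighbourhood property: indeed, the topology of a Banach space is generated by a single seminorm, namely the norm $\norm{\cdot}$, so given any sequence $(\alpha_j)_j \subset cs(X)$ each $\alpha_j$ is continuous and hence there is $\lambda_j > 0$ with $\alpha_j(x) \leq \lambda_j \norm{x}$ for all $x \in X$; taking $\alpha := \norm{\cdot}$ does the job. (This was already noted in the discussion preceding Lemma \ref{cnp_technical}.) Moreover, every Banach space is complete, hence sequentially complete.

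With these two facts in hand, the proof is a one-line invocation: $X$ is a sequentially complete locally convex space with the countable neighbourhood property, so Theorem \ref{cnp} applies directly. If $D = \sum_n a_n \frac{1}{n^s}$ is a Dirichlet series in $X$ with $\sigma_c(x'(D)) < +\infty$ for all $x' \in X'$, then Theorem \ref{cnp} yields $\sigma_c(D) < +\infty$.

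I would write this up as follows.

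\begin{proof}
Every Banach space $X$ is complete, hence sequentially complete, and satisfies the countable neighbourhood property, since its topology is generated by the single seminorm $\norm{\cdot}$: given $(\alpha_j)_j \subset cs(X)$, by continuity there is for each $j$ a constant $\lambda_j > 0$ with $\alpha_j(x) \leq \lambda_j \norm{x}$ for all $x \in X$. The conclusion now follows from Theorem \ref{cnp}.
\end{proof}

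There is essentially no obstacle here: the corollary is a specialization of Theorem \ref{cnp} to the Banach setting, and the only thing to check — that Banach spaces have the countable neighbourhood property — is elementary and was already flagged in the text. The substantive work was done in Lemma \ref{cnp_technical} and Theorem \ref{cnp}; in particular the recursive construction of the sequence $n(1) < n(2) < \cdots$ and the weight $v$ in the proof of Lemma \ref{cnp_technical} is where the real argument lies, and that is already available to us.
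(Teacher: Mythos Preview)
Your proposal is correct and matches the paper's approach exactly: the corollary is stated without proof in the paper, as it is an immediate specialization of Theorem~\ref{cnp} to Banach spaces, using precisely the facts (sequential completeness and the countable neighbourhood property) that the paper already flagged in the paragraph preceding Lemma~\ref{cnp_technical}.
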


It is possible to characterize those Fr\'echet $X$ that enjoy the property exhibited in Theorem \ref{cnp}. A sequence $(\gamma_n)_n \subset \C$ is rapidly decreasing if $(n^k \gamma_n)_n$ is bounded for every $k \in \N$. We refer the reader to \cite[Definition 3.1]{Vogt} for the precise definition of the topological invariant $(LB_{\infty})$ of Vogt. It is related to the (DN) type conditions of Vogt; see \cite{Vogt}. By \cite[Satz 3.2]{Vogt}, a Fr\'echet space $F$ satisfies $(LB_{\infty})$ if and only if every continuous linear operator from the Fr\'echet space $S$ of rapidly decreasing sequences into $F$ is bounded, i.e.\ maps a neighbourhood of $S$ into a bounded set of $F$. This is written $L(S,F)=LB(S,F)$ in the notation of \cite{Vogt}. The space $S$ is usually denoted as $s$, but we prefer to keep the notation $s$ for the complex numbers in this article.

\begin{theorem} \label{lbinfty}
A Fr\'echet space $X$ satisfies condition $(LB_\infty)$ of Vogt if and only if every Dirichlet series
$D = \sum_n a_n \dfrac{1}{n^s}$ in $X$ such that $\sigma_c(x'(D)) < +\infty$ for all $x' \in X'$ must also satisfy $\sigma_c(D) < +\infty$.
\end{theorem}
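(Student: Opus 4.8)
The plan is to route both implications through Vogt's characterization \cite[Satz 3.2]{Vogt}: a Fr\'echet space $X$ satisfies $(LB_\infty)$ if and only if every continuous linear operator from the space $S$ of rapidly decreasing sequences into $X$ is bounded, i.e.\ $L(S,X)=LB(S,X)$. Throughout I would equip $S$ with the fundamental system of seminorms $q_k(\gamma):=\sup_n n^k|\gamma_n|$ (equivalently the $\ell^1$-type seminorms $p_k(\gamma):=\sum_n n^k|\gamma_n|$), $k\in\N$, and use the standard description of the dual: $S'$ consists of the polynomially bounded sequences, paired with $S$ via $\langle(c_n)_n,(\gamma_n)_n\rangle=\sum_n c_n\gamma_n$; in particular every $\xi\in S'$ satisfies $|\xi(e_n)|\le C_\xi n^{k_\xi}$ for suitable $C_\xi,k_\xi$, where $(e_n)_n$ is the canonical basis of $S$.

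For the necessity, I would assume $X$ has the stated Dirichlet property and take an arbitrary $T\in L(S,X)$. Setting $a_n:=Te_n$ and $D:=\sum_n a_n n^{-s}$, for each $x'\in X'$ the functional $x'\circ T$ lies in $S'$, so $|x'(a_n)|=|(x'\circ T)(e_n)|\le C_{x'}n^{k(x')}$; hence $x'(D)$ converges absolutely for $\re s>k(x')+1$, and $\sigma_c(x'(D))\le\sigma_a(x'(D))<+\infty$. The hypothesis then forces $\sigma_c(D)<+\infty$, so $\sigma_a(D)<+\infty$ by Proposition \ref{elemnetaryprop}(v); thus there is $k_0\in\N$ with $\sum_n\alpha(a_n)n^{-k_0}<+\infty$, in particular $\alpha(a_n)\le C_\alpha n^{k_0}$, for every $\alpha\in cs(X)$. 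Consequently $(a_n/n^{k_0})_n$ is bounded in $X$, and so is its closed absolutely convex hull $B$. For $\gamma$ in the $0$-neighbourhood $U:=\{\gamma\in S: p_{k_0}(\gamma)\le1\}$ the partial sums of $T\gamma=\sum_n(n^{k_0}\gamma_n)(a_n/n^{k_0})$ are absolutely convex combinations of the $a_n/n^{k_0}$, hence lie in $B$, so $T\gamma\in B$. Therefore $T(U)\subseteq B$ is bounded, $T\in LB(S,X)$, and since $T$ was arbitrary, $X$ satisfies $(LB_\infty)$.

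For the sufficiency, I would assume $X$ satisfies $(LB_\infty)$ and take a Dirichlet series $D=\sum_n a_n n^{-s}$ with $\sigma_c(x'(D))<+\infty$ for all $x'$. Exactly as in the proof of Theorem \ref{cnp}, $\sigma_a(x'(D))\le\sigma_c(x'(D))+1<+\infty$, so for each $x'$ there is $k(x')$ with $\sum_n|x'(a_n)|n^{-k(x')}<+\infty$, hence $|x'(a_n)|\le C_{x'}n^{k(x')}$. I would then define $T:S\to X$ by $T\gamma:=\sum_n\gamma_n a_n$, and this is the step I expect to be the main obstacle: the growth bounds on $|x'(a_n)|$ are only pointwise in $x'$, with no uniformity over polars of seminorm balls of $X$, so convergence of $\sum_n\gamma_n a_n$ \emph{in $X$} is not directly available. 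It can be recovered via Orlicz--Pettis: for every $M\subseteq\N$ one has $\sum_{n\in M}|\gamma_n x'(a_n)|\le C_{x'}\sum_n|\gamma_n|n^{k(x')}<+\infty$, so the series is subseries convergent for $\sigma(X,X')$, hence subseries convergent in $X$ by the Orlicz--Pettis theorem (see \cite{Jarchow}); in particular it converges in $X$ and $T$ is well defined and linear. I would then verify that $T$ has closed graph --- if $\gamma^{(j)}\to\gamma$ in $S$ and $T\gamma^{(j)}\to y$, then $|x'(T\gamma^{(j)})-x'(T\gamma)|\le C_{x'}q_{k(x')+2}(\gamma^{(j)}-\gamma)\sum_n n^{-2}\to0$ while $x'(T\gamma^{(j)})\to x'(y)$, so $y=T\gamma$ --- whence $T\in L(S,X)$ by the closed graph theorem for Fr\'echet spaces. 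By $(LB_\infty)$ and \cite[Satz 3.2]{Vogt}, $T$ is bounded, so there are $k_0\in\N$ and $\delta>0$ with $T(\{\gamma:q_{k_0}(\gamma)\le\delta\})$ bounded in $X$. Since $\delta e_n/n^{k_0}$ lies in this neighbourhood, $(a_n/n^{k_0})_n$ is bounded in $X$, i.e.\ $\alpha(a_n)\le C_\alpha n^{k_0}$ for every $\alpha\in cs(X)$; then $\sum_n\alpha(a_n)n^{-(k_0+2)}<+\infty$ for every $\alpha$, so $\sigma_a(D)\le k_0+2<+\infty$ and a fortiori $\sigma_c(D)\le\sigma_a(D)<+\infty$. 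Apart from the Orlicz--Pettis point, the argument needs only Proposition \ref{elemnetaryprop}, elementary estimates for scalar Dirichlet series, the closed graph theorem, and the description of $S'$.
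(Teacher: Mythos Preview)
Your overall plan is the same as the paper's: both route through Vogt's characterization $L(S,X)=LB(S,X)$ and the correspondence $T\leftrightarrow(Te_n)_n$. The necessity direction is fine and essentially coincides with the paper's argument (only made more explicit).

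There is, however, a genuine gap in your sufficiency argument, precisely at the point you flagged. From $\sum_{n\in M}|\gamma_n x'(a_n)|<\infty$ for every $x'$ and every $M\subseteq\N$ you only get that the series $\sum_n\gamma_na_n$ is weakly unconditionally \emph{Cauchy}; you do \emph{not} get that the weak subseries limits exist as elements of $X$. The Orlicz--Pettis theorem needs the latter as its hypothesis, and the implication ``WUC $\Rightarrow$ weakly subseries convergent'' fails in general (take $\sum_ne_n$ in $c_0$). So the appeal to Orlicz--Pettis does not establish that $T\gamma\in X$.

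The fix is to bypass Orlicz--Pettis entirely and prove absolute convergence in $X$ using the special structure of $S$. Given $\gamma\in S$, set $v(n):=|\gamma_n|^{1/2}$; then $v\in\overline{V}:=\{w:\forall k\ \exists\mu_k\ w(n)\le\mu_kn^{-k}\}$. From your pointwise bounds $|x'(a_n)|\le C_{x'}n^{k(x')}$ one sees that $(v(n)a_n)_n$ is weakly bounded, hence bounded by Mackey's theorem, so $\alpha(v(n)a_n)\le M_\alpha$ for every $\alpha\in cs(X)$. Then
\[
\sum_n|\gamma_n|\,\alpha(a_n)=\sum_nv(n)\,\bigl(v(n)\alpha(a_n)\bigr)\le M_\alpha\sum_nv(n)<\infty,
\]
so $\sum_n\gamma_na_n$ converges absolutely in $X$ and $T$ is well defined; your closed graph verification then goes through. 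This square-root/Mackey step is exactly what the paper obtains by quoting \cite[Lemma~2.1]{BMS} and \cite[Lemma~2]{Bierstedt_Bonet}: the paper rewrites the Dirichlet condition as ``$(v(n)a_n)_n$ bounded for all $v\in\overline{V}$ $\Rightarrow$ $(n^{-k}a_n)_n$ bounded for some $k$'' and identifies these sequence spaces with $L(S,X)$ and $LB(S,X)$, thereby avoiding any convergence issue for the operator series.
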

\begin{proof}
We set $v_k(n):=n^{-k}, n,k \in \N$. The dual $S'$ of the space $S$ of rapidly decreasing sequences coincides with $\cup_k \ell_\infty(v_k)$. Here $(\mu_n)_n \in \ell_\infty(v_k)$ if and only if $\sup_n v_k(n) |\mu_n| < +\infty$. As in \cite{BMS}, we define
$$
\overline{V}:=\{v=(v(n))_n \ | \ \forall k \ \exists \mu_k \ \forall n \ \ v(n) \leq \mu_k n^{-k} \}.
$$
A sequence $(x_n)_n$ in $X$ satisfies that $(v(n) x_n)_n$ is bounded in $X$ for every $v \in \overline{V}$ if and only if $(v(n) x_n)_n$ is weakly bounded for every $v \in \overline{V}$. By \cite[Lemma 2.1]{BMS} this is equivalent to the fact that for each $x' \in X'$ there is $k=k(x')$ such that $(n^{-k} x'(x_n))_n$ is bounded, that is $(x'(x_n))_n \in \ell_\infty(v_k)$. On account of this fact, it is easy to see that every Dirichlet series
$D$ in $X$ such that $\sigma_c(x'(D)) < +\infty$ for all $x' \in X'$ must also satisfy $\sigma_c(D) < +\infty$ if and only if for every sequence $(x_n)_n$ in $X$, such that $(v(n) x_n)_n$ is bounded in $X$, for every $v \in \overline{V}$, there is $k \in \N$ such that $(n^{-k} x_n)_n$ is bounded in $X$.

Let $(e_n)_n$ be the canonical basis of the space $S$. Mapping continuous linear maps $T:S \rightarrow X$ into the sequence $(T(e_n))_n$ we can identify the space $L(S,X)$ of all continuous linear maps from $S$ into $X$ with the space of all sequences $(x_n)_n$ in $X$ such that $(v(n) x_n)_n$ is bounded in $X$ for every $v \in \overline{V}$, as well as the space $LB(S,X)$ of all bounded linear maps from $S$ into $X$ with the space of all sequences $(x_n)_n$ in $X$ such that there is $k \in \N$ such that $(n^{-k} x_n)_n$ is bounded in $X$; see \cite[Lemma 2]{Bierstedt_Bonet}. Accordingly, every Dirichlet series
$D$ in $X$ such that $\sigma_c(x'(D)) < +\infty$ for all $x' \in X'$ must also satisfy $\sigma_c(D) < +\infty$ if and only if $L(S,X)=LB(S,X)$. The conclusion now follows from Vogt \cite[Satz 3.2]{Vogt}.
\end{proof}

Now we consider the abscissa of uniform (weak) convergence of Dirichlet series $D = \sum_n a_n \dfrac{1}{n^s}$ in $X$. To do this, we proceed as in the seminal work of Bohr \cite{Boas_Football} and define the abscissa of boundedness. Recall that $D \in H([\re s > \sigma_c(D)],X)$ by Proposition \ref{holomorphic}. We define $\sigma_b(D)$ as the infimum of all $r$ such that $D$ defines a bounded holomorphic function on $[\re s > r]$. Bohr \cite{Bohr_Uber} proved the fundamental result that $\sigma_b(D)=\sigma_u(D)$ for each scalar Dirichlet series $D$. The following result is a direct consequence of the definitions.

\begin{proposition} \label{boundedabseasy}
Let $D = \sum_n a_n \dfrac{1}{n^s}$ be a Dirichlet series in $X$.

\begin{itemize}

\item[(i)] $\sigma_b(D) = \sup_{x' \in X'} \sigma_b(x'(D))$.

\item[(ii)] Assume $-\infty < \sigma_c(D) < +\infty$. If $r > \sigma_u(D)$, then $\{D(s) \ | \ \re s \geq r \}$ is bounded in $X$. In particular, $\sigma_b(D) \leq \sigma_u(D)$.

\end{itemize}

\end{proposition}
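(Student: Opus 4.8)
The plan is to deduce both items directly from the definitions together with the scalar theory and Bohr's theorem $\sigma_b(x'(D)) = \sigma_u(x'(D))$ for scalar Dirichlet series, and the fact (already observed in the excerpt, around the definition of $\sigma^w_i(D)$) that $\sigma^w_u(D) = \sup_{x'\in X'}\sigma_u(x'(D)) = \sigma_u(D)$ by Corollary \ref{uequalsweaku}. For item (i): if $r > \sigma_b(D)$, then $D$ is a bounded holomorphic function on $[\re s > r]$ with values in $X$, so for each $x'\in X'$ the scalar function $x'(D)$ is bounded holomorphic on $[\re s > r]$ (it is the composition with a continuous linear functional, which carries bounded sets to bounded sets); hence $\sigma_b(x'(D)) \le r$ for all $x'$, giving $\sup_{x'}\sigma_b(x'(D)) \le \sigma_b(D)$. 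Conversely, $x'(D)$ is just the scalar Dirichlet series $\sum_n x'(a_n) n^{-s}$, and $\sigma_b(x'(D)) = \sigma_u(x'(D))$ by Bohr, so $\sup_{x'}\sigma_b(x'(D)) = \sup_{x'}\sigma_u(x'(D)) = \sigma_u(D)$. It therefore suffices to produce the reverse inequality $\sigma_b(D) \le \sigma_u(D)$, which is exactly the content of the last sentence of item (ii), so I would prove that inequality and then close the loop on (i).

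For item (ii), assume $-\infty < \sigma_c(D) < +\infty$ and fix $r > \sigma_u(D)$. I would first pick $r'$ with $\sigma_u(D) < r' < r$; by definition of $\sigma_u(D)$ the series $\sum_n a_n n^{-s}$ converges uniformly on $[\re s \ge r']$ to $D(s)$. Uniform convergence of a series of $X$-valued continuous functions on $[\re s \ge r']$ means that the partial sums $S_N(s) = \sum_{n\le N} a_n n^{-s}$ form a Cauchy sequence uniformly in $s$ for every seminorm $\alpha \in cs(X)$; in particular $\sup_{\re s \ge r'}\alpha(D(s) - S_N(s)) \to 0$, so for $N$ large $\sup_{\re s \ge r'}\alpha(D(s)) \le 1 + \sup_{\re s \ge r'}\alpha(S_N(s))$. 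Each partial sum $S_N$ is a finite $X$-valued exponential sum, and on the closed half-plane $[\re s \ge r']$ one has $\alpha(S_N(s)) \le \sum_{n\le N}\alpha(a_n) n^{-r'} =: C_{\alpha,N} < \infty$, a bound independent of $s$. Hence $\sup_{\re s \ge r'}\alpha(D(s)) < \infty$ for every $\alpha \in cs(X)$, which is precisely the statement that $\{D(s) \mid \re s \ge r'\}$ — and a fortiori the smaller set $\{D(s)\mid \re s \ge r\}$ — is bounded in $X$. This proves the displayed boundedness claim, and since $D$ is holomorphic on $[\re s > \sigma_c(D)] \supseteq [\re s > \sigma_u(D)]$ by Proposition \ref{holomorphic} (note $\sigma_c(D) \le \sigma_u(D)$ by Proposition \ref{elemnetaryprop}(iv)), we conclude $D$ is a bounded holomorphic function on $[\re s > r]$ for every $r > \sigma_u(D)$, i.e.\ $\sigma_b(D) \le \sigma_u(D)$.

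I do not anticipate a genuine obstacle here; the only point requiring a little care is making sure that uniform convergence of an $X$-valued series in a non-normed locally convex space is correctly interpreted seminorm-by-seminorm, and that $\sigma_c(D)$ being finite (plus $\sigma_c(D)\le\sigma_u(D)$) is what licenses invoking Proposition \ref{holomorphic} to know $D$ is holomorphic on the relevant half-plane so that the word \emph{bounded holomorphic} in the definition of $\sigma_b$ applies. Once the boundedness estimate above is in hand, both parts follow formally. If one wishes to avoid re-deriving $\sup_{x'}\sigma_u(x'(D)) = \sigma_u(D)$, one can instead prove (i) purely from (ii) and the trivial inequality $\sigma_b(x'(D)) \le \sigma_b(D)$ for all $x'$ together with Bohr's scalar identity applied in the form $\sigma_b(x'(D)) = \sigma_u(x'(D))$ and the already-noted coincidence of the weak and strong abscissa of uniform convergence; either route is routine.
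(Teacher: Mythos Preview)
Your argument for (ii) is correct and is essentially the routine verification the paper has in mind.

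Your argument for (i), however, is circular. You invoke Corollary~\ref{uequalsweaku} (the equality $\sup_{x'}\sigma_u(x'(D))=\sigma_u(D)$) to obtain the inequality $\sigma_b(D)\le\sup_{x'}\sigma_b(x'(D))$. But look at how Corollary~\ref{uequalsweaku} is proved in the paper: it reads ``By Theorem~\ref{unifDGMP}, Proposition~\ref{boundedabseasy}~(i) and Bohr's fundamental theorem\ldots''. So Corollary~\ref{uequalsweaku} \emph{uses} the very statement you are trying to prove. (Incidentally, the passage you cite ``around the definition of $\sigma^w_i(D)$'' only records the trivial inequality $\sigma^w_i(D)\le\sigma_i(D)$, not the equality.) Your proposed alternative route at the end suffers from the same defect, since it again appeals to ``the already-noted coincidence of the weak and strong abscissa of uniform convergence''.

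The intended direct argument for the non-trivial inequality in (i) bypasses uniform convergence entirely. Suppose $r>\sup_{x'}\sigma_b(x'(D))$. Then for every $x'\in X'$ we have $\sigma_c(x'(D))\le\sigma_b(x'(D))<r$, so by Proposition~\ref{c_equals_weak_c} also $\sigma_c(D)<r$ and $D$ is holomorphic on $[\re s>r]$. Moreover each scalar function $x'(D)$ is bounded on $[\re s>r]$, i.e.\ $\{D(s):\re s>r\}$ is weakly bounded in $X$; by Mackey's theorem it is bounded in $X$. Hence $\sigma_b(D)\le r$, and letting $r\downarrow\sup_{x'}\sigma_b(x'(D))$ gives the reverse inequality. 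This is what the paper means by ``a direct consequence of the definitions''.
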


The following result for Banach spaces is due to Defant, Garc\'{\i}a, Maestre and P\'erez-Garc\'{\i}a \cite{defantGMP}; see also \cite[Theorem 2.2]{DefantGMS}. Its proof in the Banach space case is involved and it requires a careful analysis of Bohr's arguments in the scalar case. It is a version of a fundamental result of Bohr \cite{Bohr_Uber} for sets of scalar Dirichlet series instead of a single scalar series. The proof of the version below for locally convex spaces $X$ is obtained by a reduction argument to the local Banach spaces. Given a continuous seminorm $\alpha \in cs(X)$, we denote by $X_\alpha$ the Banach space that is the completion of the normed space $X/\alpha^{-1}(0)$, endowed with the norm $\tilde{\alpha}(x + \alpha^{-1}(0))=\alpha(x), x \in X$. We write $\pi_\alpha: X \rightarrow X_\alpha$ for the canonical map.

\begin{theorem} \label{unifDGMP}
Let $D= \sum_n a_n \dfrac{1}{n^s}$ be a Dirichlet series in a sequentially complete locally convex space $X$. Then the abscissa $\sigma_u(D)$ of uniform convergence coincides with the abscissa $\sigma_b(D)$ of boundedness.
\end{theorem}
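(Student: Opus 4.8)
The plan is to reduce the statement to the scalar case of Bohr's theorem ($\sigma_u = \sigma_b$ for scalar Dirichlet series, \cite{Bohr_Uber}) combined with the Banach-space version of the Defant--Garc\'{\i}a--Maestre--P\'erez-Garc\'{\i}a theorem \cite{defantGMP}, applied to the local Banach spaces $X_\alpha$, $\alpha \in cs(X)$. First I would dispose of the trivial cases: if $\sigma_c(D) = +\infty$ then by the discussion after Proposition \ref{c_equals_weak_c} we have $\sigma^w_c(D) = +\infty$, so some $x'(D)$ is nowhere convergent and $\sigma_b(x'(D)) = +\infty = \sigma_u(x'(D))$, forcing both $\sigma_b(D)$ and $\sigma_u(D)$ to be $+\infty$ (using Proposition \ref{boundedabseasy}(i) for $\sigma_b$); if $\sigma_c(D) = -\infty$ a separate easy argument applies. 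So I may assume $-\infty < \sigma_c(D) \le \sigma_u(D) \le +\infty$, and by Proposition \ref{boundedabseasy}(ii) it remains only to prove $\sigma_u(D) \le \sigma_b(D)$.

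The core step: fix $r > \sigma_b(D)$; I must show $\sum_n a_n n^{-s}$ converges uniformly on $[\re s > r]$. Fix $\alpha \in cs(X)$ and consider the ``pushed-forward'' Dirichlet series $D_\alpha := \sum_n \pi_\alpha(a_n) n^{-s}$ in the Banach space $X_\alpha$. Because $\pi_\alpha$ is continuous and linear, $D_\alpha$ is a holomorphic $X_\alpha$-valued function on $[\re s > \sigma_c(D)]$ (Proposition \ref{holomorphic}), and it is bounded on $[\re s > r]$ since $\pi_\alpha$ maps the bounded set $\{D(s) : \re s > r\}$ (bounded by the definition of $\sigma_b$ together with Proposition \ref{holomorphic}) into a bounded, hence $\tilde\alpha$-bounded, set of $X_\alpha$. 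Thus $\sigma_b(D_\alpha) \le r$. Now apply the Banach-space version of Bohr's theorem / the DGMP result \cite{defantGMP}, \cite[Theorem 2.2]{DefantGMS}, which gives $\sigma_u(D_\alpha) = \sigma_b(D_\alpha) \le r$; hence $\sum_n \pi_\alpha(a_n) n^{-s}$ converges uniformly in $X_\alpha$ on $[\re s > r]$, i.e.\ its tails are uniformly $\tilde\alpha$-small, i.e.\ $\alpha\big(\sum_{n=N}^{M} a_n n^{-s}\big) \to 0$ uniformly on $[\re s > r]$ as $N,M \to \infty$. Since this holds for every $\alpha \in cs(X)$ and $X$ is sequentially complete (so that the partial sums, being $\alpha$-Cauchy uniformly for each $\alpha$, converge in $X$), the series $\sum_n a_n n^{-s}$ converges uniformly on $[\re s > r]$. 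Letting $r \downarrow \sigma_b(D)$ gives $\sigma_u(D) \le \sigma_b(D)$.

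I expect the main obstacle to be the bookkeeping around the reduction rather than a deep new idea: one must be careful that ``bounded holomorphic function with values in $X$'' really does push forward to a bounded holomorphic function in each $X_\alpha$ with $\sigma_b$ controlled by $r$, and that uniform $\tilde\alpha$-Cauchyness of tails for all $\alpha$ simultaneously yields an honest uniformly convergent series in $X$ (this is where sequential completeness of $X$ enters, exactly as in Lemma \ref{abelvv}). A secondary subtlety is checking that the hypothesis $\sigma_c(D) < +\infty$ needed to invoke Proposition \ref{holomorphic} and \ref{boundedabseasy}(ii) is automatic once $\sigma_b(D) < +\infty$: if $\sigma_b(D) < +\infty$ then for each $x'$ the scalar series $x'(D)$ has $\sigma_b(x'(D)) \le \sigma_b(D) < +\infty$, hence $\sigma_c(x'(D)) \le \sigma_u(x'(D)) = \sigma_b(x'(D)) < +\infty$ by scalar Bohr, and then Proposition \ref{c_equals_weak_c} gives $\sigma_c(D) = \sigma^w_c(D) < +\infty$; the remaining case $\sigma_b(D) = +\infty$ is handled by Proposition \ref{boundedabseasy}(ii), which yields $\sigma_b(D) \le \sigma_u(D)$ directly. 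Assembling these pieces completes the proof.
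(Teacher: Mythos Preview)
Your approach is essentially identical to the paper's: push $D$ forward to the local Banach spaces $X_\alpha$ via $\pi_\alpha$, invoke the Banach-space Bohr theorem \cite[Theorem~2.2]{DefantGMS} there, and translate the uniform $\tilde\alpha$-Cauchyness of the tails back to $X$. The only slip is the inference ``$\sigma_u(D_\alpha)\le r$, hence uniform convergence on $[\re s>r]$'', which requires a strict inequality; the paper fixes this by inserting an intermediate level $\sigma_b(D)<r(1)<r(0)$, so that $\sigma_b(D_\alpha)\le r(1)<r(0)$ and uniform convergence on $[\re s>r(0)]$ genuinely follows.
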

\begin{proof}
By Proposition \ref{boundedabseasy} (ii), $\sigma_b(D) \leq \sigma_u(D)$. If $\sigma_b(D) = +\infty$, there is nothing to prove. Otherwise, take $\sigma_b(D) < r(0)$ and select $\sigma_b(D) < r(1) < r(0)$. Fix $\alpha \in cs(X)$ and $\varepsilon >0$. By Proposition \ref{holomorphic}, $D(s)= \sum_n a_n \dfrac{1}{n^s}$ is holomorphic in $[\re s > r(1)]$. Moreover $\{ D(s) \ | \ \re s > r(1) \}$ is bounded in $X$. The continuity of $\pi_\alpha$ implies that the abscissa of convergence of the Dirichlet series $\pi_\alpha(D):= \sum_n \pi_\alpha(a_n) \dfrac{1}{n^s}$ in $X_\alpha$ is smaller or equal than $\sigma_c(X)$. Therefore
$\pi_\alpha(D)$ defines a holomorphic function $g_\alpha \in H([\re s > r(1)],X_\alpha)$. Moreover, $g_\alpha(s) = \pi_\alpha(D(s))$ for each $s \in \C$ with $\re s > r(1)$, hence $g_\alpha$ is holomorphic and bounded in $[\re s > r(1)]$, i.e.\ $g_\alpha \in H_\infty([\re s > r(1)],X_\alpha)$, and $\sigma_b(\pi_\alpha(D)) \leq r(1)$. We apply \cite[Theorem 2.2]{DefantGMS} to conclude that $\sum_n \pi_\alpha(a_n) \dfrac{1}{n^s}$ converges uniformly in $X_\alpha$ on $[\re s > r(0)]$. Therefore, given $\varepsilon>0$ there is $N_0 \in \N$ such that if $N>M \geq N_0$, then
$$
\tilde{\alpha}\big(\sum_{n=M}^N \pi_\alpha(a_n) \dfrac{1}{n^s}\big) = \alpha\big(\sum_{n=M}^N a_n \dfrac{1}{n^s}\big) < \varepsilon
$$
for each $s \in \C$ with $\re s > r(0)$. The proof is complete.
\end{proof}

\begin{corollary} \label{uequalsweaku}
$\sigma_u(D) = \sup_{x' \in X'} \sigma_u(x'(D))$ for every Dirichlet series $D= \sum_n a_n \dfrac{1}{n^s}$ in $X$.
\end{corollary}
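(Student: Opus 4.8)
The plan is to obtain the identity by chaining together three facts already available: Theorem~\ref{unifDGMP}, which identifies $\sigma_u(D)$ with the abscissa of boundedness $\sigma_b(D)$; Proposition~\ref{boundedabseasy}(i), which expresses $\sigma_b(D)$ as the supremum over $x' \in X'$ of the scalar abscissas of boundedness $\sigma_b(x'(D))$; and Bohr's classical theorem \cite{Bohr_Uber}, which asserts that $\sigma_b(E) = \sigma_u(E)$ for every scalar Dirichlet series $E$.

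Concretely, first apply Theorem~\ref{unifDGMP} to get $\sigma_u(D) = \sigma_b(D)$. Next invoke Proposition~\ref{boundedabseasy}(i) to write $\sigma_b(D) = \sup_{x' \in X'} \sigma_b(x'(D))$. Finally, since each $x'(D) = \sum_n x'(a_n) \dfrac{1}{n^s}$ is a scalar Dirichlet series, Bohr's theorem gives $\sigma_b(x'(D)) = \sigma_u(x'(D))$ for every $x' \in X'$. Combining these equalities yields $\sigma_u(D) = \sup_{x' \in X'} \sigma_u(x'(D))$, which is precisely the assertion; recall that $\sup_{x' \in X'} \sigma_u(x'(D)) = \sigma^w_u(D)$ is the abscissa of uniform convergence of $D$ for the weak topology $\sigma(X,X')$, so the corollary says the abscissas of uniform convergence for the original and the weak topologies coincide.

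There is essentially no obstacle here: the genuinely difficult step, namely that boundedness of the holomorphic function defined by $D$ forces uniform convergence in the vector valued setting, has been carried out in Theorem~\ref{unifDGMP}, whose proof reduces to the local Banach spaces $X_\alpha$ and the Banach space result of \cite{defantGMP}, \cite{DefantGMS}. The only points worth a word are the degenerate cases, and these are covered automatically because both Theorem~\ref{unifDGMP} and Proposition~\ref{boundedabseasy}(i) are stated without finiteness hypotheses: if $\sigma_c(D) = +\infty$ then, by the remark following Proposition~\ref{c_equals_weak_c}, $\sup_{x'} \sigma_c(x'(D)) = +\infty$, whence $\sup_{x'} \sigma_u(x'(D)) = +\infty = \sigma_u(D)$ as well, and when all quantities are finite the chain of equalities above applies verbatim. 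Hence the corollary holds in full generality.
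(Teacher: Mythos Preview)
Your proof is correct and follows exactly the same chain of equalities as the paper: $\sigma_u(D)=\sigma_b(D)=\sup_{x'\in X'}\sigma_b(x'(D))=\sup_{x'\in X'}\sigma_u(x'(D))$, invoking Theorem~\ref{unifDGMP}, Proposition~\ref{boundedabseasy}(i), and Bohr's scalar theorem in turn. The additional remarks on degenerate cases are harmless but not needed, since the three cited results already hold unconditionally.
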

\begin{proof}
By Theorem \ref{unifDGMP}, Proposition \ref{boundedabseasy} (i) and Bohr's fundamental theorem for the scalar case, we have  $\sigma_u(D)=\sigma_b(D)=\sup_{x' \in X'} \sigma_b(x'(D))=\sup_{x' \in X'} \sigma_u(x'(D))$.
\end{proof}

\section{Abscissa of absolute convergence} \label{sect_a}

In this section we compare the abscissas of absolute convergence $\sigma_a(D)$ and weak convergence $\sigma^w_a(D)$ for Dirichlet series $D=\sum_n a_n \dfrac{1}{n^s}$ in a sequentially complete locally convex space $X$. We start with the following easy example showing that $\sigma_a(D) \neq \sigma^w_a(D)$ in general, contrary to what happens for the abscissas of convergence and uniform convergence.

\begin{example} \label{excanonical}
Let $D= \sum_n \frac{e_n}{n^s}$, where $(e_n)_n$ is the canonical basis of $X=\ell_p, 1 \leq p < +\infty$ or $X=c_0$. It is easy to see that $\sigma_a(D)=1$ in all cases, but $\sigma^w_a(D)=1/p, 1 \leq p < +\infty,$ and $\sigma^w_a(D)=0$ if $X=c_0$.
\end{example}

For a sequentially complete locally convex space $X$, the \textit{gap for absolute convergence of Dirichlet series in $X$} is defined by $G_a(X):= \sup_{D} (\sigma_a(D) -  \sigma^w_a(D))$, where the supremum is taken over all the Dirichlet series $D$ with coefficients in $X$ such that $\sigma_c(D) < +\infty$. Since $\sigma_c(D) = \sigma^w_c(D) \leq \sigma^w_a(D) \leq \sigma_a(D) \leq \sigma_c(D) + 1$ by Propositions \ref{elemnetaryprop} (v) and \ref{c_equals_weak_c}, we have $0 \leq G_a(X) \leq 1$ for every space $X$. If $X$ is finite dimensional, then $G_a(X)=0$. Observe that Example \ref{excanonical} implies $G_a(\ell_p) \geq 1 - 1/p, 1 \leq p < +\infty,$ and $G_a(c_0) =1$.

A sequence $(x_n)_n$ in a sequentially complete locally convex space $X$ is called \textit{absolutely summable} if $\sum_n \alpha(x_n) < +\infty$ for each $\alpha \in cs(X)$. The sequence $(x_n)_n$ is \textit{unconditionally summable} if for every permutation $\pi$ of $\N$, the series $\sum_n x_{\pi(n)}$ converges in $X$. The sequence $(x_n)_n$ is \textit{weakly unconditionally convergent} if it is unconditionally convergent for the weak topology $\sigma(X,X')$. This is equivalent to the fact that $\sum_n |x'(x_n)|< +\infty$ for each $x' \in X'$ by Riemann's rearrangement theorem. More information about these concepts can be seen in \cite[Section 14.6]{Jarchow}, \cite{Pietsch} and, for Banach spaces, in \cite{DiJaTo} and \cite{Kadets}.

\begin{proposition} \label{gaplowerboundBanach}
For every infinite dimensional Banach space $X$, $G_a(X) \geq 1/2$.
\end{proposition}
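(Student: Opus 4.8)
The plan is to exploit the classical Dvoretzky–Rogers theorem: in every infinite-dimensional Banach space there is an unconditionally summable sequence that is not absolutely summable. More precisely, for each $n$ Dvoretzky–Rogers provides, for a prescribed summable sequence of positive scalars $(c_n)_n$ with $\sum_n c_n^2 < \infty$, a sequence $(x_n)_n$ in $X$ with $\sum_n |x'(x_n)| < \infty$ for every $x' \in X'$ (equivalently $(x_n)_n$ is weakly unconditionally convergent) while $\norm{x_n} = c_n$ and $\sum_n \norm{x_n} = +\infty$. I would choose $c_n$ of the order $n^{-1/2}(\log n)^{-1}$, say $c_n = 1/(n^{1/2} \log(n+1))$ for $n \geq 2$, so that $\sum_n c_n^2 < \infty$ but $\sum_n c_n = +\infty$; any such rate of polynomial decay exactly at the exponent $1/2$ will do.

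Next I would form the Dirichlet series $D = \sum_n a_n \dfrac{1}{n^s}$ with $a_n := x_n$. First I must check $\sigma_c(D) < +\infty$, which is needed for $D$ to enter the supremum defining $G_a(X)$: since $\sup_n \norm{a_n} < +\infty$ and more, the series converges absolutely, hence in $X$, for $\re s > 1$, so $\sigma_c(D) \leq 1 < +\infty$. Then I would estimate the two abscissas. For the weak abscissa: for every $x' \in X'$,
$$
\sum_n \frac{|x'(a_n)|}{n^{\re s}} \leq \frac{1}{1} \sum_n |x'(a_n)| < +\infty \quad \text{for } \re s \geq 0,
$$
hence $\sigma_a(x'(D)) \leq 0$ for every $x'$, giving $\sigma^w_a(D) \leq 0$. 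For the absolute abscissa: $\sum_n \dfrac{\norm{a_n}}{n^{\sigma}} = \sum_n \dfrac{c_n}{n^{\sigma}}$, and with $c_n \sim n^{-1/2}/\log n$ this sum behaves like $\sum_n n^{-\sigma - 1/2}/\log n$, which diverges for $\sigma \leq 1/2$ and converges for $\sigma > 1/2$; therefore $\sigma_a(D) = 1/2$. Combining, $\sigma_a(D) - \sigma^w_a(D) \geq 1/2 - 0 = 1/2$, and taking the supremum over such $D$ yields $G_a(X) \geq 1/2$.

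The only delicate point is the quantitative form of Dvoretzky–Rogers: I need not merely existence of some weakly-unconditionally-convergent, non-absolutely-convergent sequence, but one whose norms decay at the prescribed rate $c_n$. This is exactly the standard statement of the Dvoretzky–Rogers theorem (as found in \cite{DiJaTo} or \cite{Kadets}): given $(c_n)_n \in \ell_2$, there is a weakly unconditionally convergent series $\sum_n x_n$ in $X$ with $\norm{x_n} = c_n$ for all $n$. With $(c_n)_n \notin \ell_1$ this forces $\sum_n \norm{x_n} = +\infty$, which is all that is required. Everything else is a routine comparison of Dirichlet abscissas with the scalar series $\sum_n n^{-\sigma}\norm{a_n}$ and uses only the elementary estimates already available. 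An alternative, avoiding any logarithmic factors, is to use for each $\varepsilon > 0$ a Dvoretzky–Rogers sequence with $c_n = n^{-1/2-\varepsilon}$, which has $\sum c_n^2 < \infty$; then $\sigma^w_a(D) \leq 0$ as before while $\sigma_a(D) = 1/2 - \varepsilon$, and letting $\varepsilon \downarrow 0$ in the supremum again gives $G_a(X) \geq 1/2$. I would present whichever of these two variants reads most cleanly, but the logarithmic choice has the mild advantage of producing a single series attaining the bound.
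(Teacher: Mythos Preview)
Your argument is correct and is essentially the paper's own proof: both invoke the quantitative Dvoretzky--Rogers theorem to produce an unconditionally summable sequence $(a_n)$ with prescribed norms in $\ell_2 \setminus \ell_1$, form $D=\sum_n a_n n^{-s}$, and read off $\sigma^w_a(D)\le 0$ versus $\sigma_a(D)$ close to $1/2$. Your ``alternative'' with $c_n=n^{-1/2-\varepsilon}$ is exactly the paper's choice (they write $r=1/2+\varepsilon$ and let $r\downarrow 1/2$); your logarithmic choice $c_n\asymp n^{-1/2}/\log n$ is a harmless refinement yielding a single series with $\sigma_a(D)=1/2$ rather than a limiting family.
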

\begin{proof}
Take $1>r>1/2$. The sequence $(1/n^r)_n$ belongs to $\ell_2$. We apply Dvoretzky-Rogers Theorem \cite[Theorem 1.2]{DiJaTo} to find an unconditionally summable sequence $(a_n)_n$ in $X$ such that $||a_n||=1/n^r$ for each $n \in \N$. For each $x' \in X'$, we have $\sum_n |x'(a_n)| < +\infty$. The Dirichlet series $D=\sum_n a_n \dfrac{1}{n^s}$ in $X$ satisfies $\sigma^w_a(D) \leq 0$, and $\sigma_a(D) = 1- r$. Therefore $\sigma_a(D) -  \sigma^w_a(D) \geq 1-r$. This implies $G_a(X) \geq 1-r$ for each $r>1/2$, and the conclusion follows.
\end{proof}

The following lemma will be useful later.

\begin{lemma}\label{c0}
If the sequentially complete locally convex space $X$ satisfies $G_a(X)<1$, then every weakly summable sequence in $X$ is unconditionally summable.
\end{lemma}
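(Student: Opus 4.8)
The plan is to argue by contradiction. Suppose $G_a(X) < 1$ and let $(x_n)_n$ be a weakly summable sequence in $X$, i.e.\ $\sum_n |x'(x_n)| < +\infty$ for every $x' \in X'$; assume, toward a contradiction, that $(x_n)_n$ is not unconditionally summable. From this failure I will build a Dirichlet series $D$ in $X$ with $\sigma_c(D) < +\infty$ and $\sigma_a(D) - \sigma^w_a(D) = 1$, contradicting $G_a(X) < 1$.

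First I would invoke the Cauchy criterion for unconditional summability in a sequentially complete locally convex space: if no $\alpha_0 \in cs(X)$, $\varepsilon_0 > 0$ and finite sets $F_1 < F_2 < \cdots$ in $\N$ (meaning $\max F_k < \min F_{k+1}$) with $\alpha_0\big(\sum_{n \in F_k} x_n\big) \geq \varepsilon_0$ existed, then every rearrangement of $\sum_n x_n$ would have Cauchy, hence convergent, partial sums. So such $\alpha_0,\varepsilon_0,(F_k)_k$ do exist. Put $y_k := \sum_{n \in F_k} x_n \in X$ and set $D := \sum_k y_k \dfrac{1}{k^s}$. Since the $F_k$ are pairwise disjoint, $\sum_k |x'(y_k)| \leq \sum_n |x'(x_n)| < +\infty$ for every $x' \in X'$, so each scalar series $x'(D)$ converges absolutely at $s = 0$; hence $\sigma_a(x'(D)) \leq 0$ for all $x'$, which gives $\sigma^w_a(D) \leq 0$ and, a fortiori, $\sigma^w_c(D) \leq 0$. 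By Proposition \ref{c_equals_weak_c}, $\sigma_c(D) = \sigma^w_c(D) \leq 0 < +\infty$, so $D$ lies among the series over which the supremum defining $G_a(X)$ is taken.

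It remains to evaluate $\sigma_a(D)$. By Proposition \ref{elemnetaryprop}(v), $\sigma_a(D) \leq \sigma_c(D) + 1 \leq 1$. Conversely, for every $s \in \C$ with $\re s \leq 1$ we have $\sum_k \alpha_0(y_k) k^{-\re s} \geq \varepsilon_0 \sum_k k^{-\re s} = +\infty$, so $D$ does not converge absolutely at any such $s$, whence $\sigma_a(D) \geq 1$. Therefore $\sigma_a(D) = 1$ and $G_a(X) \geq \sigma_a(D) - \sigma^w_a(D) \geq 1$, the desired contradiction; consequently $(x_n)_n$ is unconditionally summable.

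The point that needs care — and the reason the obvious choice does not work — is that taking $D = \sum_n x_n \dfrac{1}{n^s}$ directly may fail: the obstruction to unconditional summability could be carried by terms sitting at a sparse set of indices, which would keep $\sigma_a(D)$ well below $1$. Repackaging the failure of the Cauchy criterion through the consecutive blocks $y_k = \sum_{n \in F_k} x_n$ is exactly what relocates the obstruction to the positions $1,2,3,\dots$, where $\sum_k k^{-r}$ diverges for every $r \le 1$. Apart from this device, the only background fact used is the standard equivalence, valid in sequentially complete locally convex spaces, between unconditional summability and the Cauchy condition on finite partial sums.
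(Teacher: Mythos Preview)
Your argument is correct, and it is a genuinely different route from the paper's. The paper does not build a Dirichlet series directly from the failure of unconditional summability; instead it observes that a subspace isomorphic to $c_0$ would force $G_a(X)\ge G_a(c_0)=1$ (by Example~\ref{excanonical}), and then invokes the locally convex extension of the Bessaga--Pelczy\'nski theorem from \cite{QR} to conclude that the absence of a copy of $c_0$ turns weakly summable sequences into unconditionally summable ones. Your proof bypasses this structural detour: by repackaging the obstruction into the block vectors $y_k=\sum_{n\in F_k}x_n$ with $\alpha_0(y_k)\ge\varepsilon_0$, you manufacture a Dirichlet series $D=\sum_k y_k k^{-s}$ with $\sigma^w_a(D)\le 0$ and $\sigma_a(D)=1$ directly, using only the Cauchy criterion for unconditional summability and results already established in the paper (Propositions~\ref{elemnetaryprop} and~\ref{c_equals_weak_c}). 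The gain is self-containment and elementarity---no appeal to \cite{QR} or the Bessaga--Pelczy\'nski machinery is needed---while the paper's version is terser but leans on a substantial external theorem.
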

\begin{proof}
If $G_a(X)<1$, the space $X$ cannot contain a copy of the Banach space $c_0$, since otherwise $1= G_a(c_0) \leq G_a(X) \leq 1$, by Example \ref{excanonical} for the Banach space $c_0$. This is a contradiction. The conclusion now follows from a result in \cite{QR} that is an extension of a classical theorem of Bessaga and Pelczy\'nski; see e.g.\ \cite[Theorem 6.4.3]{Kadets}.
\end{proof}

Let $2 \leq p < \infty$. A Banach space $Y$ is
said to have \textit{cotype $p$} whenever there is some constant $C>0$ such that for each choice of
finitely many vectors $x_1, \ldots, x_N \in Y$ we have
\[
\Big( \sum_{n=1}^N \|x_n\|^p \Big)^{1/p} \leq C \Big(\int_0^1 \big\| \sum_{n=1}^N r_n(t)x_n
\big\|^2 dt\Big)^{1/2}\,,
\]
where $r_n$ stands for the $n$th Rademacher function on $[0,1]$. Every Banach space $Y$
has cotype $\infty$ since $\max_n \|x_n\|$ is always  dominated by the
Rademacher average of the $x_n$\,. As usual we write
\[
\Cot (Y) := \inf \{2 \leq p \leq \infty\, | \, Y\,\, \text{has cotype $p$}\}\,.
\]

An operator $T:X \rightarrow Y$ between Banach spaces $X$ and $Y$  is \textit{$(p,1)$-summing}, $1 \leq p \leq \infty$,
whenever there is a constant $c>0$
such that for each choice of finitely many $x_1, \ldots, x_n \in X$ we have that $(\sum_i
\|Tx_i\|^p)^{1/p} \leq c\, \sup_{\|x^\ast\|\leq 1} \sum_i |x^\ast(x_i)|\,$.
For every infinite dimensional Banach space $Y$ a fundamental result of
Maurey and Pisier \cite{Maurey_Pisier} (see also \cite[Theorem 14.5]{DiJaTo}, \cite{Rakov} and \cite{Talagrand}) shows that $\Cot (Y) = \inf
\{ 2 \leq p \leq \infty \,|\, \text{$\Id_Y$ is $(p,1)$-summing} \}\,.$
The identity $\Id_Y$ of a Banach space $Y$ is $(p,1)$-summing if and only if every weakly summable sequence $(x_n)$
in $Y$ satisfies $\sum_n ||x_n||^p < +\infty$, i.e.\ $Y$ has the \textit{$p$-Orlicz property}.

\begin{proposition} \label{upperboundBanach}
If the identity $\Id_X$ of an infinite dimensional Banach space $X$ is $(p,1)$-summing, $2 \leq p < +\infty$, then $G_a(X) \leq 1-(1/p)$.
\end{proposition}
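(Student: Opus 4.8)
The plan is to dominate $\sigma_a(D)$ by $\sigma^w_a(D) + 1 - 1/p$ directly, combining the $p$-Orlicz property of $X$ (which holds precisely because $\Id_X$ is $(p,1)$-summing, as recalled above) with Hölder's inequality. Fix a Dirichlet series $D = \sum_n a_n \dfrac{1}{n^s}$ in $X$ with $\sigma_c(D) < +\infty$. If $\sigma_c(D) = -\infty$ then $\sigma_a(D) = -\infty$ by Proposition \ref{elemnetaryprop}(v) and there is nothing to prove, so assume $\sigma_c(D) \in \R$; by Proposition \ref{elemnetaryprop}(iv),(v) and Proposition \ref{c_equals_weak_c} all the abscissas of $D$ then lie in $[\sigma_c(D), \sigma_c(D) + 1]$, in particular $\sigma^w_a(D) \in \R$. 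Let $r \in \R$ with $r > \sigma^w_a(D) = \sup_{x' \in X'} \sigma_a(x'(D))$. Then $r > \sigma_a(x'(D))$ for every $x' \in X'$, so, directly from the definition of the abscissa of absolute convergence of the scalar series $x'(D)$, $\sum_n |x'(a_n)| n^{-r} < +\infty$. Thus the sequence $(a_n n^{-r})_n$ is weakly summable in $X$.

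Since $\Id_X$ is $(p,1)$-summing, $X$ has the $p$-Orlicz property, and hence weak summability of $(a_n n^{-r})_n$ yields $\sum_n \|a_n\|^p n^{-rp} < +\infty$. Now fix any real $t$ with $t > r + 1 - 1/p$ and apply Hölder's inequality with conjugate exponents $p$ and $p' := p/(p-1)$:
$$
\sum_n \frac{\|a_n\|}{n^t} \;=\; \sum_n \frac{\|a_n\|}{n^{r}}\cdot\frac{1}{n^{t-r}} \;\leq\; \Big(\sum_n \frac{\|a_n\|^p}{n^{rp}}\Big)^{1/p}\Big(\sum_n \frac{1}{n^{(t-r)p'}}\Big)^{1/p'}.
$$
The first factor is finite by the previous step, and the second factor is finite because $(t-r)p' > (1 - 1/p)\,p' = 1$. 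Hence $\sum_n \|a_n\| n^{-t} < +\infty$ for every real $t > r + 1 - 1/p$, so $\sigma_a(D) \leq r + 1 - 1/p$. Letting $r$ decrease to $\sigma^w_a(D)$ gives $\sigma_a(D) \leq \sigma^w_a(D) + 1 - 1/p$, i.e.\ $\sigma_a(D) - \sigma^w_a(D) \leq 1 - 1/p$. Since $D$ was an arbitrary Dirichlet series with $\sigma_c(D) < +\infty$, we conclude $G_a(X) \leq 1 - 1/p$.

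I do not expect a genuine obstacle here; the heart of the argument is the one-line Hölder estimate once the correct object, namely $(a_n n^{-r})_n$, has been isolated. The only step requiring a little care is deducing weak summability of $(a_n n^{-r})_n$ from the hypothesis $r > \sigma_a(x'(D))$ for every $x'$, which is exactly the content of the identity $\sigma^w_a(D) = \sup_{x'} \sigma_a(x'(D))$ together with the fact that $\sum_n |x'(a_n)| n^{-\re s} < +\infty$ whenever $\re s > \sigma_a(x'(D))$. It is also worth noting that the $p$-Orlicz property is invoked for a single fixed sequence only, so no uniformity in $x'$ beyond weak summability is needed: the constant in the $(p,1)$-summing inequality absorbs everything else. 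Complementing this upper bound with a matching lower bound, so as to actually compute $G_a(X)$ when $\Cot(X)=p$, will presumably require the Maurey--Pisier machinery and is a separate matter, not addressed here.
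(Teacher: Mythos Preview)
Your proof is correct and follows essentially the same route as the paper: fix $r>\sigma^w_a(D)$, observe that $(a_n n^{-r})_n$ is weakly summable, use the $p$-Orlicz property (equivalently, that $\Id_X$ is $(p,1)$-summing) to get $\sum_n \|a_n\|^p n^{-rp}<\infty$, and then apply H\"older's inequality with exponents $p,p'$ to conclude $\sigma_a(D)\le r+1/p'$. The only differences are cosmetic---you treat the case $\sigma_c(D)=-\infty$ explicitly and add some closing commentary---but the mathematical argument is identical.
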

\begin{proof}
Let $D= \sum_n a_n \dfrac{1}{n^s}$ be a Dirichlet series in $X$ with $\sigma_c(D) < +\infty$ and take $r \in \R$ with $\sigma^w_a(D) < r$.
Then $\sum_n \frac{|x'(a_n)|}{n^r}$ for each $x' \in X'$; i.e.\ $\sum_n \frac{a_n}{n^r}$ is weakly unconditionally summable. By assumption,
$\sum_n \frac{||a_n||^p}{n^{rp}} < +\infty$. Take $t > r + \frac{1}{p'}$ with $ \frac{1}{p} + \frac{1}{p'} = 1$. We apply H\"older's inequality to get
$$
\sum_n \frac{||a_n||}{n^{t}} \leq \big( \sum_n \frac{||a_n||^p}{n^{rp}} \big)^{1/p}  \big( \sum_n \frac{1}{n^{(t-r) p'}} \big)^{1/p'} < +\infty,
$$
because $(t-r)p' >1$. Therefore $\sigma_a(D) \leq r + \frac{1}{p'}$, hence $\sigma_a(D) - \sigma^w_a(D) \leq \frac{1}{p'}$. Since the series $D$ with
$\sigma_c(D) < +\infty$ is arbitrary, we conclude $G_a(X) \leq \frac{1}{p'} = 1 - \frac{1}{p}$.
\end{proof}

\begin{proposition} \label{lowerboundBanach}
If  $X$ is an infinite dimensional Banach space such that $G_a(X) <r <1$, then  the identity $\Id_X$ is $(t,1)$-summing for each $t>1/(1-r)$.
\end{proposition}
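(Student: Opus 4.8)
The plan is to use the criterion recalled just above the statement: $\Id_X$ is $(t,1)$-summing if and only if $X$ has the $t$-Orlicz property, i.e.\ every weakly summable sequence in $X$ has norms in $\ell_t$. So I fix $t > 1/(1-r)$ and an arbitrary weakly summable sequence $(x_n)_n$ in $X$, and the goal becomes $\sum_n \|x_n\|^t < +\infty$. First I would reduce to the monotone case: since $G_a(X) < 1$, Lemma \ref{c0} shows $(x_n)_n$ is unconditionally summable, so $\|x_n\| \to 0$ and $(\|x_n\|)_n$ admits a non-increasing rearrangement; because both weak summability (by Riemann's theorem) and the value of $\sum_n \|x_n\|^t$ are invariant under permutations of $\N$, I may assume $\|x_1\| \ge \|x_2\| \ge \cdots$.

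Next I associate to $(x_n)_n$ the Dirichlet series $D := \sum_n x_n \dfrac{1}{n^s}$. Weak summability means $\sum_n |x'(x_n)| < +\infty$ for every $x' \in X'$, hence for $\re s \ge 0$ one has $\sum_n |x'(x_n)| n^{-\re s} \le \sum_n |x'(x_n)| < +\infty$; thus $\sigma_a(x'(D)) \le 0$ for every $x'$, so $\sigma^w_a(D) \le 0$. By Proposition \ref{c_equals_weak_c} and Proposition \ref{elemnetaryprop}(iv), $\sigma_c(D) = \sigma^w_c(D) \le \sigma^w_a(D) \le 0 < +\infty$, so $D$ lies in the family over which $G_a(X)$ is defined as a supremum. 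Therefore $\sigma_a(D) - \sigma^w_a(D) \le G_a(X) < r$, and since $\sigma^w_a(D) \le 0$ this forces $\sigma_a(D) < r$. Consequently $\sum_n \|x_n\| n^{-\sigma} < +\infty$ for every $\sigma > r$.

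Now monotonicity does the work. Fix $\sigma > r$ (note $\sigma > r > 1/2 > 0$ by Proposition \ref{gaplowerboundBanach}). Grouping over dyadic blocks, and using $\|x_n\| \ge \|x_{2N}\|$ and $n^{-\sigma} \ge (2N)^{-\sigma}$ for $N < n \le 2N$,
$$
\sum_{N < n \le 2N} \|x_n\|\, n^{-\sigma} \ \ge \ N\,\|x_{2N}\|\,(2N)^{-\sigma} \ = \ 2^{-\sigma}\,\|x_{2N}\|\,N^{1-\sigma},
$$
and the left-hand side, being a tail of the convergent series $\sum_n \|x_n\| n^{-\sigma}$, tends to $0$; hence $\|x_n\| = o(n^{\sigma-1})$ as $n\to\infty$. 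As $\sigma > r$ was arbitrary, $\|x_n\| = o(n^{\beta})$ for every $\beta > r-1$. Since $t > 1/(1-r)$ is equivalent to $-1/t > r-1$, I may pick $\beta$ with $r-1 < \beta < -1/t$; then $\|x_n\|^t = o(n^{\beta t})$ with $\beta t < -1$, so $\sum_n \|x_n\|^t < +\infty$. Applying this to every weakly summable sequence gives the $t$-Orlicz property of $X$, hence $\Id_X$ is $(t,1)$-summing, which is the assertion.

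The only delicate point is the bookkeeping of exponents: one needs the strict inequality $G_a(X) < r$ to obtain $\sigma_a(D) < r$ rather than only $\le r$, and $\beta$ can be chosen strictly between $r-1$ and $-1/t$ precisely because $t > 1/(1-r)$ — so the threshold in the statement is exactly what this argument yields. I also expect that passing to the non-increasing rearrangement will be the step most easily overlooked: the pointwise decay $\|x_n\| = o(n^{\sigma-1})$ genuinely relies on monotonicity and fails for an unordered weakly summable sequence (e.g.\ the unit vector basis of $c_0$), even though the value of $\sum_n \|x_n\|^t$ is unaffected by the reordering.
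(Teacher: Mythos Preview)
Your proof is correct and follows essentially the same route as the paper: pass to a non-increasing rearrangement via Lemma~\ref{c0}, form the Dirichlet series $D$, use $G_a(X)<r$ and $\sigma^w_a(D)\le 0$ to force $\sigma_a(D)<r$, and then convert absolute convergence plus monotonicity into the decay $\|x_n\|=O(n^{r-1})$ needed for $\sum_n\|x_n\|^t<\infty$. The only cosmetic difference is that the paper obtains $n^{1-r}\|a_n\|\to 0$ at the single value $\sigma=r$ by citing the Abel--Pringsheim theorem (\cite[Theorem~3.3.1]{Knopp}: a convergent series with non-increasing terms satisfies $n b_n\to 0$), whereas you reprove this inline via the dyadic-block estimate and then quantify over $\sigma>r$; the two are equivalent here.
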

\begin{proof}
By Proposition \ref{gaplowerboundBanach}, $1/2 < r < 1$, hence $1/(1-r)>2$. Let $(x_n)_n$ be a weakly unconditionally summable sequence in $X$.
Since $G_a(X)<1$, we can apply Lemma \ref{c0} to conclude that $(x_n)_n$ is unconditionally summable, in particular $(x_n)_n$ converges to $0$ in $X$. Let $(||a_n||)_n$ be a decreasing rearrangement of $(||x_n||)_n$. As $(x_n)_n$ is unconditionally summable, $\sum_n |x'(a_n)| < +\infty$ for each $x' \in X'$, hence $\sigma^w_a(D) \leq 0$ for $D:= \sum_n a_n \dfrac{1}{n^s}$. By assumption $\sigma_a(D) < r$, thus $\sum_n \frac{||a_n||}{n^r} < +\infty$. The sequence $(\frac{||a_n||}{n^r})_n$ is decreasing, hence $\lim_{n \rightarrow \infty} n^{1-r}||a_n|| = 0$ by \cite[Theorem 3.3.1]{Knopp}. There is $M>0$ such that $||a_n|| \leq M/n^{1-r}$ for each $n \in \N$. If $t>1/(1-r)$, we have $\sum_n ||a_n||^t \leq M^t \sum_n 1/n^{t(1-r)} < +\infty$. This implies $\sum_n ||x_n||^t < +\infty$, since it is a rearrangement.
\end{proof}

\begin{theorem} \label{cotypeBanach}
Let $X$ be an infinite dimensional Banach space with cotype $p \geq 2$, then $G_a(X)= 1- 1/\Cot (X)$.
\end{theorem}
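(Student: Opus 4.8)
The plan is to derive the identity $G_a(X) = 1 - 1/\Cot(X)$ by squeezing $G_a(X)$ between the two bounds already established in Propositions \ref{upperboundBanach} and \ref{lowerboundBanach}, using the Maurey--Pisier characterization $\Cot(X) = \inf\{2 \le t \le \infty \mid \Id_X \text{ is } (t,1)\text{-summing}\}$ to translate those bounds to the critical exponent $q := \Cot(X)$. Note first that, since $X$ has cotype $p$ for some finite $p$, we have $2 \le q \le p < \infty$; this finiteness of $q$ will be used repeatedly. The one preliminary observation needed is the monotonicity of summing exponents: if $\Id_X$ is $(t,1)$-summing and $t \le t'$, then $\Id_X$ is $(t',1)$-summing, because $\|\cdot\|_{\ell_{t'}} \le \|\cdot\|_{\ell_t}$ on every finite sequence of scalars. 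Hence the set of exponents $t$ for which $\Id_X$ is $(t,1)$-summing is an interval with left endpoint $q$; in particular $\Id_X$ is $(t,1)$-summing for \emph{every} $t > q$.

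For the upper bound I would fix an arbitrary $t > q$, invoke the monotonicity observation to see that $\Id_X$ is $(t,1)$-summing, and then apply Proposition \ref{upperboundBanach} to get $G_a(X) \le 1 - 1/t$. Letting $t \downarrow q$ gives $G_a(X) \le 1 - 1/q$.

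For the lower bound I would argue by contradiction: assume $G_a(X) < 1 - 1/q$. By Proposition \ref{gaplowerboundBanach} we know $G_a(X) \ge 1/2$, and since $q < \infty$ we have $1 - 1/q < 1$, so one can choose $r$ with $G_a(X) < r < 1 - 1/q < 1$. Proposition \ref{lowerboundBanach} then says $\Id_X$ is $(t,1)$-summing for every $t > 1/(1-r)$. Now $r < 1 - 1/q$ is exactly $1/(1-r) < q$, so the interval $(1/(1-r),\, q)$ is nonempty; picking $t$ in it, $\Id_X$ is $(t,1)$-summing, whence by Maurey--Pisier $q = \Cot(X) \le t < q$, a contradiction. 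Therefore $G_a(X) \ge 1 - 1/q$, and combining with the upper bound yields $G_a(X) = 1 - 1/\Cot(X)$.

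I do not expect a serious obstacle here, since the substantive work is packaged in Propositions \ref{upperboundBanach} and \ref{lowerboundBanach} and in the cited Maurey--Pisier theorem; the only point requiring care is the passage from the \emph{infimum} description of cotype to statements valid arbitrarily close to the endpoint exponent $q$ — handled by the $(p,1)$-summing monotonicity — together with the bookkeeping that $q<\infty$ guarantees both $1-1/q<1$ and the nonemptiness of the interval $(1/(1-r),q)$ used in the contradiction.
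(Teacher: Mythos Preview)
Your proposal is correct and follows essentially the same route as the paper: the upper bound via Proposition \ref{upperboundBanach} applied at exponents $t>\Cot(X)$ and letting $t\downarrow\Cot(X)$, and the lower bound by contradiction using Proposition \ref{lowerboundBanach} together with the Maurey--Pisier identity $\Cot(X)=\inf\{t:\Id_X\text{ is }(t,1)\text{-summing}\}$. You have simply made explicit the monotonicity of $(t,1)$-summing exponents and the bookkeeping that $q<\infty$ guarantees the relevant intervals are nonempty, which the paper leaves implicit.
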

\begin{proof}
The inequality $G_a(X) \leq 1- 1/\Cot (X)$ is a direct consequence of Proposition \ref{upperboundBanach}. Suppose that $G_a(X) < r < 1- 1/\Cot (X)$.
By Proposition \ref{lowerboundBanach}, $\Id_X$ is $(t,1)$-summing for each $t>1/(1-r)$, and we can apply Maurey, Pisier's fundamental result \cite{Maurey_Pisier} to conclude that $1/(1-r) \leq \Cot (X)$. This implies $1 - 1/\Cot (X) \leq r$, a contradiction.
\end{proof}

\begin{remark}
Let $X$ be an infinite dimensional Banach space. As a consequence of Corollary \ref{uequalsweaku}, we have
$G_a(X) \leq \sup(\sigma_a(D) - \sigma_u(D)) =:T(X)$. Accordingly, the upper estimate in Theorem \ref{cotypeBanach} is a direct consequence of the main, deep Theorem 1 in \cite{defantGMP}. Observe that for a finite dimensional space $X$, $G_a(X)=0$, while $T(\C)=1/2$ by Bohnenblust, Hille Theorem (see \cite{DefantPeris}), thus the estimate $G_a(X) \leq T(X)$ has no consequence for Bohr's absolute convergence problem, that requires much deeper techniques \cite{DefantGMS}. However, in the infinite dimensional case it clarifies the role of weak unconditionally convergence of series in \cite[Theorem 1]{defantGMP}.
\end{remark}

We refer the reader to \cite[Chapter 11]{albiackalton}, \cite[Chapter 14]{DiJaTo} and \cite[Chapter 5]{Kadets} for finite representability and related concepts necessary in our next statement.

\begin{theorem} \label{nofinitecotype} Let $X$ be an infinite dimensional Banach space. The following conditions are equivalent.

\begin{itemize}

\item[(i)] $G_a(X)=1$.

\item[(ii)] $X$ does not have finite cotype.

\item[(iii)] $X$ contains $\ell^n_\infty$'s $\lambda$-uniformly for some (and then all) $\lambda >1$.

\item[(iv)] $\Id_X$ is not $(p,1)$-summing for any $2 \leq p < +\infty$.

\item[(v)] $\ell_\infty$ is finite representable in $X$.

\end{itemize}

\end{theorem}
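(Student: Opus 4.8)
The plan is to establish the chain of implications $(i)\Rightarrow(ii)\Rightarrow(iv)\Rightarrow(i)$ together with the classical equivalences $(ii)\Leftrightarrow(iii)\Leftrightarrow(v)$, most of which are already available in the literature. The hard direction, and the only one that genuinely involves the gap $G_a(X)$, is $(i)\Rightarrow(ii)$ (equivalently, its contrapositive $(ii)\,\text{fails}\Rightarrow(i)\,\text{fails}$, i.e. finite cotype $\Rightarrow G_a(X)<1$); but this follows directly from Theorem \ref{cotypeBanach}, since if $X$ has finite cotype $p=\Cot(X)<\infty$ then $G_a(X)=1-1/\Cot(X)<1$. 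Conversely, $(iv)\Rightarrow(i)$: if $\Id_X$ is not $(p,1)$-summing for any finite $p$, then $X$ has no finite cotype by the Maurey--Pisier theorem already quoted before Proposition \ref{upperboundBanach}, hence by $(ii)\Rightarrow(i)$ we get $G_a(X)=1$. So the real content reduces to: $G_a(X)=1$ iff $X$ has no finite cotype.

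For the core equivalence I would argue as follows. If $\Cot(X)=\infty$, then by Maurey--Pisier $\Id_X$ is not $(p,1)$-summing for any finite $p$, so $X$ fails the $p$-Orlicz property for every $p<\infty$: for each $p$ there is a weakly summable sequence $(x_n^{(p)})$ in $X$ with $\sum_n\|x_n^{(p)}\|^p=\infty$. One then wants, for each $\eps>0$, to produce a Dirichlet series $D$ with $\sigma_c(D)<\infty$ and $\sigma_a(D)-\sigma_a^w(D)\ge 1-\eps$. The natural construction: pick $p$ large with $1/p<\eps$; normalize the weakly summable sequence so $\sigma_a^w(D)\le 0$ for $D=\sum_n a_n/n^s$; and show $\sigma_a(D)$ is close to $1$. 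Here is where care is needed — one should pass to a decreasing rearrangement $(\|a_n\|)_n$ of $(\|x_n^{(p)}\|)_n$ (still weakly summable, hence unconditionally summable once we know $X$ cannot contain $c_0$ — but if $X$ contains $c_0$ we are already done by Lemma \ref{c0} and Example \ref{excanonical}). From $\sum_n\|a_n\|^p=\infty$ and monotonicity one deduces $\limsup_n n^{1/p}\|a_n\|>0$ along a subsequence is not quite enough; rather, if $\sigma_a(D)<1-1/p+\delta$ for all the constructed series we would get, reasoning as in Proposition \ref{lowerboundBanach}, that $\Id_X$ is $(t,1)$-summing for $t$ near $p/(p-1)\cdot$(something), contradicting $\Cot(X)=\infty$ when $\eps$ is small. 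Cleanly: the already-proved Proposition \ref{lowerboundBanach} says $G_a(X)<r<1\Rightarrow\Id_X$ is $(t,1)$-summing for $t>1/(1-r)$, so if $G_a(X)<1$ then choosing $r<1$ with $G_a(X)<r$ gives $X$ finite cotype $\le 1/(1-r)$ via Maurey--Pisier. Thus $\Cot(X)=\infty\Rightarrow G_a(X)=1$, which is $(ii)\Rightarrow(i)$, and the converse $(i)\Rightarrow(ii)$ is Theorem \ref{cotypeBanach} as noted.

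Finally the equivalences $(ii)\Leftrightarrow(iii)\Leftrightarrow(v)$ are entirely classical and I would simply cite them: the Maurey--Pisier theorem states that a Banach space $X$ has no finite cotype if and only if $\ell_\infty^n$'s are finitely representable in $X$ uniformly, equivalently $\ell_\infty$ is finitely representable in $X$ (see \cite[Chapter 14]{DiJaTo}, \cite[Chapter 11]{albiackalton}, \cite[Chapter 5]{Kadets}), and the uniform finite representability of $\ell_\infty^n$ for one $\lambda>1$ propagates to all $\lambda>1$ by a standard perturbation/renorming argument. The equivalence $(ii)\Leftrightarrow(iv)$ is precisely the Maurey--Pisier identity $\Cot(X)=\inf\{p\mid \Id_X\text{ is }(p,1)\text{-summing}\}$ recorded before Proposition \ref{upperboundBanach}, with the convention that the infimum of the empty set is $\infty$.

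The main obstacle is organizational rather than technical: making sure the $c_0$-case is correctly segregated (so that Lemma \ref{c0} and the already-proved Propositions \ref{lowerboundBanach}--\ref{cotypeBanach} can be invoked verbatim), and verifying that Proposition \ref{lowerboundBanach} is genuinely stated for all $r<1$ (it is, via the hypothesis $G_a(X)<r<1$), so that letting $r\uparrow 1$ yields finite cotype whenever $G_a(X)<1$. Given that, the whole theorem is a bookkeeping exercise assembling results already in the paper plus Maurey--Pisier.
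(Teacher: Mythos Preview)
Your proposal is correct and takes essentially the same approach as the paper: cite the literature (Maurey--Pisier, \cite{DiJaTo}, \cite{albiackalton}) for the classical equivalences $(ii)\Leftrightarrow(iii)\Leftrightarrow(iv)\Leftrightarrow(v)$, and deduce $(i)\Leftrightarrow(ii)$ by contradiction from Propositions~\ref{upperboundBanach} and~\ref{lowerboundBanach}. The only cosmetic difference is that you invoke Theorem~\ref{cotypeBanach} (a consequence of Proposition~\ref{upperboundBanach}) for the direction ``finite cotype $\Rightarrow G_a(X)<1$'', whereas the paper goes straight to Proposition~\ref{upperboundBanach}; and your middle paragraph sketches an unnecessary direct construction before (correctly) abandoning it for the clean contrapositive via Proposition~\ref{lowerboundBanach}.
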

\begin{proof}
Conditions (ii), (iii) and (iv) are equivalent by \cite[Theorem 14.1]{DiJaTo}, and (ii) and (v) are equivalent by \cite[Theorem 11.1.14 (ii)]{albiackalton}. Finally, the equivalence of (i) and (ii) follows from Propositions \ref{upperboundBanach} and \ref{lowerboundBanach}, proceeding by contradiction to prove both implications.
\end{proof}

\begin{remark}
If $X$ is an infinite dimensional Banach space such that $\ell_\infty$ is finite representable in $X$, then for each $(t_n)_n \in c_0$ such that $ t_n > 0, n \in \N,$ there is an unconditionally summable sequence $(x_n)_n$ in $X$ such that $||x_n||=t_n$ for each $n \in \N,$ by \cite[Theorem 5.2.1]{Kadets}. It is then possible to exhibit a Dirichlet series $D$ in $X$ such that $\sigma_a(D) - \sigma^w_a(D) =1$. Indeed, construct by induction a sequence $(s_n)_n \in c_0$ with $ s_n > 0, n \in \N,$ such that for each $k \in \N$ there is $n(k) \in \N$ such that $s_n > n^{-1/k}$ for $n \geq n(k)$. Now select an unconditionally summable sequence $(a_n)_n$ in $X$ with $||a_n||=s_n$ for each $n \in \N$. The Dirichlet series $D:=\sum_n \frac{a_n}{n^s}$ satisfies $\sigma^w_a(D) \leq 0$ and $\sigma_a(D) \geq 1$.
\end{remark}

\begin{theorem} \label{nuclear}
(a) If $X$ is a nuclear sequentially complete locally convex space, then $G_a(X)=0$.

\vspace{.1cm}
\noindent
(b) The following conditions are equivalent for a Fr\'echet space $X$:

\begin{itemize}

\item[(i)] $X$ is nuclear.

\item[(ii)] $G_a(X)=0$.

\item[(iii)] $G_a(X) < 1/2$.

\end{itemize}
\end{theorem}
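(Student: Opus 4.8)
The plan is as follows.

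For part (a) I would use the classical summability characterization of nuclearity: a sequentially complete locally convex space $X$ is nuclear if and only if every weakly unconditionally convergent sequence in $X$ is absolutely summable (Grothendieck; see \cite{Jarchow}, \cite{Pietsch}). Let $D=\sum_n a_n\frac{1}{n^s}$ be a Dirichlet series in such an $X$ with $\sigma_c(D)<+\infty$, and fix $r>\sigma^w_a(D)$. Then $\sum_n|x'(a_n)|/n^r<+\infty$ for every $x'\in X'$, so by Riemann's theorem $(a_n/n^r)_n$ is weakly unconditionally convergent; by nuclearity it is absolutely summable, i.e.\ $\sum_n\alpha(a_n)/n^r<+\infty$ for every $\alpha\in cs(X)$. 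Hence $\sigma_a(D)\le r$, and letting $r\downarrow\sigma^w_a(D)$ gives $\sigma_a(D)\le\sigma^w_a(D)$; together with $\sigma^w_a(D)\le\sigma_a(D)$ this yields $\sigma_a(D)=\sigma^w_a(D)$. Since $D$ was arbitrary, $G_a(X)=0$.

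For part (b), the implication (i)$\Rightarrow$(ii) is exactly (a) and (ii)$\Rightarrow$(iii) is trivial, so the substance is (iii)$\Rightarrow$(i), which I would prove contrapositively: \emph{if the Fr\'echet space $X$ is not nuclear then $G_a(X)\ge 1/2$.} Let $(p_k)_k$ be an increasing fundamental system of continuous seminorms, with local Banach spaces $\hat X_{p_k}$, canonical maps $\pi_k\colon X\to\hat X_{p_k}$ and linking maps $\iota_{m,k}\colon\hat X_{p_m}\to\hat X_{p_k}$. By the Grothendieck-Pietsch criterion for nuclearity \cite{Meise_Vogt}, non-nuclearity yields an index $k_0$ such that $\iota_{m,k_0}$ is not nuclear for every $m\ge k_0$; in particular $\hat X_{p_{k_0}}$ is infinite dimensional. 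If $X$ contains a closed infinite dimensional Banach subspace $B$ one is done at once, since $G_a$ is monotone under closed subspaces (passing to a subspace alters neither $\sigma_a$ nor $\sigma^w_a$ of a series supported in it), so $G_a(X)\ge G_a(B)\ge 1/2$ by Proposition \ref{gaplowerboundBanach}. In general --- for instance when $X$ is a Schwartz space, hence has no infinite dimensional Banach subspace --- one must argue directly.

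The direct argument is a quantitative Dvoretzky-Rogers-type construction inside $X$, using that \emph{every} linking map $\iota_{m,k_0}$ is non-nuclear. Block by block, one builds a sequence $(a_j)_j$ in $X$ such that: (1) $(a_j)_j$ is weakly unconditionally convergent in $X$, so that $\sigma_a(x'(D))\le 0$ for all $x'\in X'$ and hence $\sigma^w_a(D)\le 0$, where $D:=\sum_j a_j\frac{1}{j^s}$; (2) $\sup_j p_m(a_j)<+\infty$ for each $m$, so $\sigma_a(D)\le 2$ and in particular $\sigma_c(D)<+\infty$; (3) the abscissas of the scalar series $\sum_j p_k(a_j)\frac{1}{j^s}$ have supremum $1/2$, so $\sigma_a(D)=1/2$. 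In the $\ell$-th block one uses the non-nuclearity of $\iota_{k_0+\ell,k_0}$ to produce finitely many vectors of $X$ whose signed sums are small in $p_{k_0+\ell}$ (hence in $p_m$ for all $m\le k_0+\ell$) while their $p_{k_0}$-norms are comparatively large; concatenating the blocks with appropriate scalars and block sizes so that $(p_{k_0}(a_j))_j$ lands just outside $\ell_1$ but inside $\ell_2$ (for instance $\asymp j^{-1/2}(\log j)^{-3/4}$, or merely $\asymp j^{-1/2-\delta}$ if one settles for a gap $\ge 1/2-\delta$ and then lets $\delta\downarrow 0$) produces the desired $(a_j)_j$. Granting (1), (2) and (3), $\sigma_a(D)-\sigma^w_a(D)\ge 1/2$, whence $G_a(X)\ge 1/2$, contradicting (iii); so (iii)$\Rightarrow$(i), and the circle (i)$\Rightarrow$(ii)$\Rightarrow$(iii)$\Rightarrow$(i) closes.

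The main obstacle is exactly this construction --- turning ``$\iota_{m,k_0}$ non-nuclear for all $m$'' into one weakly unconditionally convergent sequence of $X$ whose profile along $p_{k_0}$ is only barely outside $\ell_1$. The delicate point is the \emph{simultaneous} control of all the seminorms $p_m$ (this is where the non-nuclearity of every linking map, not just one, is used) while keeping $(p_{k_0}(a_j))_j$ from decaying faster than $\asymp j^{-1/2}$; the value $1/2$ is forced because every infinite dimensional Banach space, in particular each $\hat X_{p_k}$, has cotype $\ge 2$, equivalently its identity fails to be $(p,1)$-summing for $p<2$, so a weakly unconditionally convergent sequence in $\hat X_{p_{k_0}}$ must have $\ell_2$ profile. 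Once the construction is available, the two abscissa estimates and the contradiction with (iii) are routine.
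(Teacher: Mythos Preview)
Your treatment of part (a) is correct and matches the paper's one-line argument: nuclearity forces every weakly unconditionally summable sequence to be absolutely summable, which immediately gives $\sigma_a(D)=\sigma^w_a(D)$ for every $D$.

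For (iii)$\Rightarrow$(i), however, there is a genuine gap. You set up a contrapositive construction and then explicitly flag the construction itself as ``the main obstacle,'' writing ``granting (1), (2) and (3)'' without actually producing the sequence $(a_j)_j$. The difficulties you list are real: one must lift finite Dvoretzky--Rogers blocks from the local Banach spaces $\hat X_{p_m}$ back into $X$ (where $\pi_m$ has only dense range), control \emph{all} seminorms $p_m$ simultaneously on the concatenated sequence, and keep $(p_{k_0}(a_j))_j$ exactly at the $\ell_2$ threshold. None of this is done, and the passage from ``$\iota_{m,k_0}$ non-nuclear'' to the finite-block estimates you need is not the standard Dvoretzky--Rogers statement. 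As written this is a plausible programme, not a proof.

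The paper avoids this construction entirely by arguing \emph{directly} rather than contrapositively. Assume $G_a(X)<r<1/2$ and pick $q\in\,]1/(1-r),2[$. Given any weakly unconditionally summable $(x_n)_n$ in $X$, first use Lemma~\ref{c0} (no copy of $c_0$, since $G_a(X)<1$) to get unconditional summability, hence $x_n\to 0$. For a fixed $\alpha\in cs(X)$, reorder so that $(\alpha(y_n))_n$ is decreasing; the reordered series is still weakly unconditionally summable, so the Dirichlet series $\sum_n y_n/n^s$ has $\sigma^w_a\le 0$, whence $\sigma_a<r$ by the gap hypothesis, i.e.\ $\sum_n\alpha(y_n)/n^r<\infty$. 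Since $(\alpha(y_n)/n^r)_n$ is decreasing, the elementary Abel--Dini/Pringsheim fact $n\cdot\alpha(y_n)/n^r\to 0$ gives $\alpha(y_n)=O(n^{-(1-r)})$, hence $\sum_n\alpha(y_n)^q<\infty$. Thus every weakly unconditionally summable sequence in $X$ is $q$-absolutely summable for a fixed $q<2$; the closed graph theorem then makes each linking map $X_\beta\to X_\alpha$ $(q,1)$-summing, and by K\"onig--Retherford--Tomczak-Jaegermann a composition of enough $(q,1)$-summing maps with $q<2$ is nuclear, so $X$ is nuclear. This route turns the gap hypothesis itself into the needed estimate on an \emph{arbitrary} weakly summable sequence, so no explicit Dirichlet series has to be built.
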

\begin{proof}
The proof of (a) is a consequence of the fact that weakly unconditionally summable sequences in a nuclear locally convex space are absolutely summable; see e.g.\ \cite[Proposition 4.2.2]{Pietsch}.
Now only (iii) implies (i) needs a proof in statement (b). Assume that $X$ is a Fr\'echet space such that $G_a(X) < 1/2$. Fix $G_a(X) < r < 1/2$ and select $q \in ]1/(1-r),2[$. Let $(x_n)_n$ be a weakly unconditionally summable sequence in $X$. By Proposition \ref{c0}, $(x_n)_n$ is unconditionally summable, hence it converges to $0$ in $X$. Fix $\alpha \in cs(X)$. Since $(\alpha(x_n))_n$ tends to $0$, we can reorder it in a decreasing way. Denote by $(\alpha(y_n))_n$ the reordered sequence, that depends on $\alpha$. Since $\sum_n y_n$ is a rearrangement of $\sum_n x_n$ (which is weakly unconditionally summable), we have $\sum_n |x'(y_n)| < +\infty$ for each $x' \in X'$. Therefore $\sigma^w_a(\sum_n \frac{y_n}{n^s}) \leq 0$. By assumption $\sigma_a(\sum_n \frac{y_n}{n^s}) < r$, hence $\sum_n \frac{\alpha(y_n)}{n^r} < +\infty$. As $(\alpha(y_n)/n^r)_n$ is decreasing, we can apply \cite[Theorem 3.3.1]{Knopp} to conclude $\lim_{n \rightarrow \infty} n^{1-r} \alpha(y_n) = 0$. There is $M>0$ such that $\alpha(y_n) \leq M n^{1-r}$ for each $n \in \N$, hence $\sum_n \alpha(y_n)^q \leq M^q \sum_n \frac{1}{n^{q(1-r)}} < +\infty$, since $q(1-r) >1$. Consequently $\sum_n \alpha(x_n)^q < +\infty$, because it is a rearrangement. Since $\alpha \in cs(X)$ is arbitrary, we have shown that there is $q \in ]1,2[$ such that every weakly unconditionally summable sequence $(x_n)_n$ in $X$ is $q$-absolutely summable (i.e.\ $\sum_n \alpha(x_n)^q < +\infty$ for each $\alpha \in cs(X)$).

Now the closed graph theorem for Fr\'echet spaces and standard arguments (see \cite{Floret_Wloka},  \cite[Theorem 21.2.1]{Jarchow}, \cite[Proposition 28.4]{Meise_Vogt} and \cite[Section 4.2]{Pietsch}) permit us to conclude that for each $\alpha \in cs(X)$ there is $\beta \in cs(X)$ such that the canonical linking map $X_\beta \rightarrow X_\alpha$ is $(q,1)$-summing. Since $1<q<2$, and the composite of sufficiently many $(q,1)$-summing maps ($1 < q < 2$) produces a nuclear map by \cite[Corollary 5.7]{KonigRTJ}, we conclude that $X$ is nuclear.
\end{proof}

\begin{example}

\begin{itemize}

\item[(i)] As a consequence of Theorems \ref{cotypeBanach}, $G_a(\ell_p) = 1/2$ if $1 \leq p \leq 2$ and $G_a(\ell_p)=1 - 1/p$ if $2 \leq p \leq \infty$. In fact, in this case the lower estimates are a direct consequence of Proposition \ref{gaplowerboundBanach} and Example \ref{excanonical}. Compare with \cite[Corollary 3]{defantGMP}.

\item[(ii)] If a complete locally convex space $X$ is a projective limit of infinite dimensional Banach spaces $X_\gamma, \gamma \in \Gamma,$ such such that each $X_\gamma$ is of cotype $2 \leq p < +\infty$, then $G_a(X) \leq 1-1/p$. This follows from the definitions and Proposition \ref{upperboundBanach}.

\item[(iii)] Let $1 \leq p < \infty$ and let $X=\ell_{p+}$ be the Fr\'echet space defined as the intersection of all $\ell_q$ space with $q>p$. Then $G_a(\ell_{p+})=G_a(\ell_p)$, although  $\ell_{p+}$ is a non-Montel Fr\'echet space that contains no Banach space \cite{diaz}.

\item[(iv)] Every non-Montel K\"othe echelon space $\lambda_p(A)$ of order $1\leq p < +\infty$ contains a sectional subspace isomorphic to $\ell_p$; see \cite[Theorem 27.9]{Meise_Vogt}. Therefore $G_a(\lambda_p(A))=G_a(\ell_p)$ for every non-Montel K\"othe echelon space $\lambda_p(A)$.

\item[(v)] For every $t \in [1/2,1]$ there are Banach spaces $X_t$ and non-normable Fr\'echet spaces $Y_t$ such that $G_a(X_t)=G_a(Y_t)=t$.

\end{itemize}

\end{example}

\textbf{Acknowledgement.} The research of this paper was partially
supported by the projects MTM2013-43540-P and GVA Prometeo II/2013/013 (Spain).




\noindent \textbf{Author's address:}%
\vspace{\baselineskip}%

Instituto Universitario de Matem\'{a}tica Pura y Aplicada IUMPA,
Universitat Polit\`{e}cnica de Val\`{e}ncia,  E-46071 Valencia, SPAIN

email:jbonet@mat.upv.es

\end{document}